 \def\LaTeX{\leavevmode L\raise.42ex
   \hbox{\kern-.3em\size{\sf@size}{0pt}\selectfont A}\kern-.15em\TeX}
\newcommand{\BibTeX}{{\rm B\kern-.05em{\sc
i\kern-.025emb}\kern-.08em\TeX}}
\newtheorem{col}{Corollary}[section]
\newtheorem{thm}{Theorem}[section]
\newtheorem{lem}[thm]{Lemma}
\newtheorem{prop}[thm]{Proposition}
\newtheorem{rem}[thm]{Remark}
\theoremstyle{defn}
\numberwithin{equation}{section}
\begin{document}

\title[Discrete Hilbert and Kak-Hilbert transforms ]{Sampling and  interpolation for the discrete Hilbert and Kak-Hilbert  transforms}

\author{Isaac Z. Pesenson}

\address{Department of Mathematics, Temple University,
 Philadelphia,
PA,  19122 USA}
\email{pesenson@temple.edu}

\keywords{ Discrete Hilbert transform, Kak-Hilbert transform, one-parameter groups of operators, sampling, Riesz-Boas interpolation}
\subjclass {47D03, 44A15;
Secondary  4705 }

\maketitle
\begin{abstract}

 The goal of  the  paper is 
to obtain  analogs of the sampling theorems and  of the 
 Riesz-Boas interpolation formulas  which are relevant to  the Discrete Hilbert  and  Kak-Hilbert  transforms in $l^{2}$.

\end{abstract}

\section{Introduction} 
 The objective of the paper is to establish some analogs of the classical (Shannon) sampling theorems and Riesz-Boas interpolation formulas which are associated with the Discrete Hilbert  transform and  Kak-Hilbert  transform  in $l^{2}$. The basic idea  is to utilize  their one-parameter uniformly bounded groups of  operators  in the space $l^{2}$ to reduce questions about sampling and interpolation to the classical ones. Such approach to sampling and interpolation for general $C_{0}$ one-parameter uniformly bounded groups of operators in Banach spaces was developed in  \cite{Pes15a}, \cite{Pes15b}.
 The main part of the present paper is devoted to the discrete Hilbert transform. Here we show in all the details how one can use one-parameter group of isometries generated by the discrete Hilbert transform in $l^{2}$  to obtain several relevant sampling and interpolation results.

If $\widetilde{H}$ is the discrete Hilbert transform in the space $l^{2}$  with the natural inner product $\langle \cdot, \cdot \rangle$, (see section 2 for all the definitions)  then the bounded  operator $H=\pi \widetilde{H}$ generates a one-parameter group $e^{t H},\>\>t\in \mathbb{R},$ of isometries of $l^{2}$. The fact that $e^{t H},\>\>t\in \mathbb{R},$ is a group of isometries and  the explicit formula for all  $e^{t H}$ were given in  \cite{DC}. In our first sampling Theorem \ref{FST}   we give an explicit formula for a function $\langle e^{t H}{\bf a}, {\bf a}^{*}\rangle,\>\>t\in \mathbb{R},$ for every ${\bf a}, {\bf a}^{*}\in l^{2}$, in terms of equally spaced "samples" $\langle e^{\gamma k H}{\bf a}, {\bf a}^{*}\rangle,\>\>k\in \mathbb{Z},$ for any $0<\gamma<1$. In two other sampling Theorems \ref{SST} and \ref{V-T}  we express the entire trajectory  $e^{t H}{\bf a},\>\>t\in \mathbb{R},\>{\bf a}\in l^{2},$ in terms of the integer translations  $e^{k H}{\bf a},\>\>k\in \mathbb{Z}.$ In section \ref{irreg} we have an analog of a sampling theorem with irregularly spaced "samples".

In section \ref{RB} we present some analogs of the classical Riesz-Boas interpolation formulas. Namely, we give explicit formulas for  $H^{2m-1}{\bf a},\>m\in \mathbb{N},\>{\bf a}\in l^{2},$ in terms of  the vectors  $e^{(k-1/2)H}{\bf a},\>\>k\in \mathbb{Z}$, and for $H^{2m}{\bf a},\>m\in \mathbb{N},$ in terms of $e^{kH}{\bf a},\>\>k\in \mathbb{Z}$.

 In the last section \ref{KH} we briefly describe how similar results can be obtained in the case of the Kak-Hilbert transform.

\section{Some harmonic analysis associated with the discrete Hilbert transform}

We will be interested in the operator $H=\pi \widetilde{H}$ where $\widetilde{H}$ is 
 the discrete Hilbert transform operator
 $$
\widetilde{H}: l^{2}\mapsto l^{2},\>\>\>\widetilde{H}{\bf a}={\bf b},\>\>\> {\bf a}=\{a_{j}\}\in l^{2},\>\>\>{\bf b}=\{b_{m}\}\in l^{2},
$$
which is defined by the formula
\begin{equation}
b_{m}=\frac{1}{\pi}\sum_{n\neq m,\>n\in \mathbb{Z}}\frac{a_{n}}{m-n},\>\>\>m\in \mathbb{Z}.
\end{equation}
Since $H$ is a bounded operator the following exponential series converges in $l^{2}$ for every ${\bf a}\in l^{2}$ and every $t\in \mathbb{R}$
\begin{equation}
e^{tH}{\bf a}=\sum_{k=0}^{\infty}\frac{H^{k}{\bf a}}{k!}t^{k}.
\end{equation}
In fact, $H$ is a generator of a one-parameter group of operators  $e^{tH}, \>t\in \mathbb{R}$, which means that \cite{BB}, \cite{K}
\begin{enumerate}
\item 
$$e^{t_{1}H}e^{t_{2}H}=e^{(t_{1}+t_{2})H}, \>\>\>e^{0}=I,
$$
\item 
$$
e^{-t H}=\left( e^{tH}\right)^{-1},
$$

\item  for every ${\bf a}\in l^{2}$ 
$$ 
\lim_{t\rightarrow 0} \frac{e^{tH}{\bf a}-{\bf a}}{t}=H{\bf a}.
$$
\end{enumerate}

\bigskip

It is clear that for  a general bounded operator $A$ the exponent can be extended to the entire complex plane $\mathbb{C}$ and one has the estimate 
\begin{equation}\label{bound}
\|e^{zA}\|\leq \sum_{k=0}^{\infty}\frac{\|A\|^{k}|z|^{k}}{k!}= e^{\|A\||z|},\>\>\>z\in \mathbb{C}.
\end{equation}
In the nice paper by Laura De Carli and Gohin Shaikh Samad \cite{DC} about the group $e^{tH}$ the following results were obtained (among other interesting results):
\begin{enumerate}
\item the explicit formulas for the operators $e^{tH}$ were given,

\item it was shown that every operator $e^{tH}$ is an isometry in $l^{2}$.

\end{enumerate}
The explicit formulas are given in the next statement.
\begin{thm}The operator $H$ generates in $l^{2}$ a one-parameter group of isometries $ e^{tH}{\bf a}={\bf b},\>\>\>{\bf a}=(a_{n})\in l^{2}, \>\>\>{\bf b}=(b_{m})\in l^{2}, $ which is given by the formulas
$$
b_{m}=\frac{\sin(\pi t)}{\pi}\sum_{n\in \mathbb{Z}}\frac{a_{n}}{m-n+t},\>\>\>
$$
if $t\in \mathbb{R}\setminus \mathbb{Z}$, and 
$$
b_{m}=(-1)^{t}a_{m+t}, \>\>
$$
if $t\in \mathbb{Z}$.

\end{thm}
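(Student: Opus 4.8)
The cleanest route is to diagonalize $H$ by Fourier series. Identify $l^{2}=l^{2}(\mathbb{Z})$ with $L^{2}\!\big([0,2\pi),\tfrac{d\theta}{2\pi}\big)$ through the unitary map $\mathcal{F}{\bf a}(\theta)=\sum_{n}a_{n}e^{in\theta}$. The operator $\widetilde H$ is convolution on $\mathbb{Z}$ with the sequence $k_{0}=0$, $k_{j}=\tfrac{1}{\pi j}$ $(j\neq0)$, so $\mathcal{F}$ turns it into multiplication by the symbol $\widehat k(\theta)=\sum_{j\neq0}\tfrac{e^{ij\theta}}{\pi j}$. A one-line integration by parts shows that the $2\pi$-periodic function equal to $\tfrac{i(\pi-\theta)}{\pi}$ on $(0,2\pi)$ has Fourier coefficients exactly $\tfrac1{\pi j}$ for $j\neq0$ and $0$ for $j=0$; hence $\widetilde H$ is multiplication by that function, and the bound $\|\cdot\|_{\infty}\le1$ recovers $\|\widetilde H\|\le1$. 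Consequently $H=\pi\widetilde H$ acts on the Fourier side as multiplication by
\[
m(\theta)=i(\pi-\theta),\qquad \theta\in(0,2\pi).
\]
(The series for $\widehat k$ converges only conditionally; the safe way to assert the multiplier identity is to check it first on finitely supported ${\bf a}$, where every sum is finite, and then extend by density and boundedness.)

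Granting this identification, the statement is immediate modulo one computation. Since $H$ is multiplication by the bounded function $m$, the norm-convergent exponential series $e^{tH}=\sum_{k\ge0}\tfrac{t^{k}}{k!}H^{k}$ corresponds to multiplication by $e^{tm(\theta)}=e^{it(\pi-\theta)}=e^{i\pi t}e^{-it\theta}$. Because $|e^{it(\pi-\theta)}|=1$ for all $t\in\mathbb{R}$ and all $\theta$, this is a unitary operator; in particular each $e^{tH}$ is an isometry of $l^{2}$, and the group law is visible from $e^{t_{1}m}e^{t_{2}m}=e^{(t_{1}+t_{2})m}$. For $t\notin\mathbb{Z}$ the multiplier $e^{-it\theta}$ is discontinuous at $\theta\equiv0\pmod{2\pi}$, which is precisely why $e^{tH}$ is not a plain translation in that case, whereas for $t\in\mathbb{Z}$ it is $e^{i\pi t}$ times the shift by $t$.

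It remains to read off the matrix of $e^{tH}$ in the standard basis $\{\delta_{m}\}$. One has
\[
b_{m}=\big(e^{tH}{\bf a}\big)_{m}=\frac{1}{2\pi}\int_{0}^{2\pi}e^{i\pi t}e^{-it\theta}\Big(\sum_{n}a_{n}e^{in\theta}\Big)e^{-im\theta}\,d\theta=e^{i\pi t}\sum_{n}a_{n}\,\frac{1}{2\pi}\int_{0}^{2\pi}e^{i(n-m-t)\theta}\,d\theta,
\]
the interchange of sum and integral being justified by absolute convergence (since $\sum_{n\neq m}(m-n)^{-2}<\infty$, all series here converge absolutely for ${\bf a}\in l^{2}$). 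If $t\in\mathbb{Z}$ the inner integral is $\delta_{n,m+t}$ and $b_{m}=e^{i\pi t}a_{m+t}=(-1)^{t}a_{m+t}$. If $t\notin\mathbb{Z}$ the inner integral equals $\tfrac{e^{-2\pi it}-1}{2\pi i(n-m-t)}$; writing $e^{-2\pi it}-1=-2i\,e^{-i\pi t}\sin(\pi t)$ and simplifying gives $\tfrac{e^{-i\pi t}\sin(\pi t)}{\pi(m-n+t)}$, so after multiplying by the prefactor $e^{i\pi t}$ one obtains $b_{m}=\tfrac{\sin(\pi t)}{\pi}\sum_{n}\tfrac{a_{n}}{m-n+t}$, as claimed.

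The only genuinely delicate point is the first one: correctly pinning down the Fourier multiplier of the discrete Hilbert transform and handling the conditional convergence of its symbol. Everything afterward — the isometry and group properties and the closed form — is bookkeeping. A Fourier-free alternative would be to \emph{define} operators $U_{t}$ by the displayed formulas and verify directly that $\|U_{t}{\bf a}\|=\|{\bf a}\|$, $U_{t_{1}}U_{t_{2}}=U_{t_{1}+t_{2}}$, and $t^{-1}(U_{t}{\bf a}-{\bf a})\to H{\bf a}$ as $t\to0$, whence $U_{t}=e^{tH}$ by uniqueness of the uniformly continuous group generated by the bounded operator $H$; but proving the norm and composition identities by direct manipulation of the kernels amounts to reproving the Fourier computation in disguise, so the multiplier approach is the efficient one.
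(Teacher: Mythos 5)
The paper does not actually prove this theorem: it is imported verbatim from De Carli and Samad \cite{DC}, so there is no internal argument to compare against. Your Fourier-multiplier proof is correct and self-contained, and it is arguably the most transparent route. The computations check out: the sawtooth $\frac{i(\pi-\theta)}{\pi}$ on $(0,2\pi)$ does have Fourier coefficients $\frac{1}{\pi j}$ ($j\neq 0$) and $0$ ($j=0$), so $H=\pi\widetilde H$ becomes multiplication by the unimodular-exponent symbol $i(\pi-\theta)$; the exponential of a bounded multiplication operator is the multiplication by the exponential of the symbol; and the matrix-element integrals
$\frac{1}{2\pi}\int_{0}^{2\pi}e^{i(n-m-t)\theta}\,d\theta$
evaluate exactly to the two claimed formulas (Kronecker delta for $t\in\mathbb{Z}$, and $\frac{e^{-i\pi t}\sin(\pi t)}{\pi(m-n+t)}$ otherwise). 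The one step that deserves a sentence more than you give it is the identification of $\widetilde H$ with the multiplication operator on all of $l^{2}$: after verifying the identity on finitely supported sequences, you should extend not by invoking boundedness of $\widetilde H$ (which at that stage you are trying to derive) but by noting that each component $b_{m}=\frac{1}{\pi}\sum_{n\neq m}\frac{a_{n}}{m-n}$ is a continuous linear functional on $l^{2}$ by Cauchy--Schwarz, so componentwise limits identify $\widetilde H{\bf a}$ with the (already bounded) multiplication operator applied to ${\bf a}$; boundedness and $\|\widetilde H\|\leq 1$ then come out as corollaries. You clearly see this issue, so this is a presentational point, not a gap. Your closing remark is also apt: the ``define $U_{t}$ by the formulas and verify the group axioms directly'' alternative is essentially what one is forced into without the multiplier picture, and it is strictly more laborious.
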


As it was proved  by Schur \cite{Schur} the operator norm of $\widetilde{H}: l^{2}\mapsto l^{2}$ is one and therefore the operator norm of $H$ is $\pi$. 
It was shown in \cite{Graf}  that although the norm of the operator $H$ is $\pi$, only a strong inequality $\|H{\bf a}\|<\pi\|{\bf a}\|$ can hold
for every non-trivial ${\bf a}\in l^{2}$. 
\bigskip

Let's remind that a  Bernstein class \cite{Akh}, \cite{Nik}, which is denoted as ${\bf B}_{\sigma}^{p}(\mathbb{R}),\>\> \sigma\geq 0, \>\>1\leq p\leq \infty,$ is a linear space of all functions $f:\mathbb{R} \mapsto \mathbb{C}$ which belong to $L^{p}(\mathbb{R})$ and admit  extension to $\mathbb{C}$ as entire functions of exponential type $\sigma$.
A function $f$ belongs to ${\bf B}_{\sigma}^{p}(\mathbb{R})$ if and only if the following Bernstein inequality holds 
$$
\|f^{(k)}\|_{L^{p}(\mathbb{R})}\leq \sigma^{k}\|f\|_{L^{p}(\mathbb{R})}
$$
for all natural $k$. Using the distributional Fourier transform 
$$
\widehat{f}(\xi)=\frac{1}{\sqrt{2\pi}}  \int_{\mathbb{R}} f(x)e^{-i\xi x}dx, \>\>\>f\in L^{p}(\mathbb{R}), \>\>1\leq p\leq \infty,
$$
one can show (Paley-Wiener theorem) that $f\in {\bf B}_{\sigma}^{p}(\mathbb{R}), \>\>1\leq p\leq \infty,$ if and only if $f\in L^{p}(\mathbb{R}),\>\>1\leq p\leq \infty,$ and the support of $\widehat{f}$ (in sens of distributions) is in $[-\sigma, \sigma]$.

In what follows the notation $\|\cdot\|$ will always mean $\|\cdot\|_{l^{2}}$.
We note that since $H$ is a bounded operator whose norm is $\pi$ one has for all ${\bf a}\in l^{2}$  the following Bernstein-type  inequality 
\begin{equation}\label{bern-1}
\|H^{k}{\bf a}\|\leq \pi^{k}\|{\bf a }\|.
\end{equation}

Pick an ${\bf a}^{*}\in l^{2}$ and  consider a scalar-valued function 
$$
\Phi(t)=\langle e^{t H}{\bf a}, {\bf a}^{*}\rangle,\>\>\>\>t\in \mathbb{R}.
$$
The following Lemma and the  Corollary   after it can be considered as analogs of the Paley-Wiener theorem.
\begin{lem}\label{LEM}
For every ${\bf a}\in l^{2}$ and every ${\bf a}^{*}\in l^{2}$ the function $\Phi$ belongs to the Bernstein class ${\bf B}_{\pi}^{\infty}(\mathbb{R}).$ 

\end{lem}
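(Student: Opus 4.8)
The plan is to show that $\Phi$ extends to an entire function of exponential type $\pi$ which is bounded on the real line, and then invoke the Bernstein-class characterization quoted above. First I would observe that, since $H$ is a bounded operator on $l^{2}$, the series $e^{zH}{\bf a}=\sum_{k\ge 0}\frac{H^{k}{\bf a}}{k!}z^{k}$ converges in $l^{2}$ for every $z\in\mathbb{C}$ (this is exactly the estimate \eqref{bound} applied to $A=H$), and hence $z\mapsto e^{zH}{\bf a}$ is an $l^{2}$-valued entire function. Pairing with the fixed vector ${\bf a}^{*}$ and using continuity of the inner product, the scalar function
$$
\Phi(z)=\langle e^{zH}{\bf a},{\bf a}^{*}\rangle=\sum_{k=0}^{\infty}\frac{\langle H^{k}{\bf a},{\bf a}^{*}\rangle}{k!}z^{k}
$$
is an entire function of $z$; on $\mathbb{R}$ it agrees with the $\Phi(t)$ in the statement.

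Next I would estimate the growth. Using \eqref{bound} and $\|H\|=\pi$ together with Cauchy–Schwarz,
$$
|\Phi(z)|\le \|e^{zH}{\bf a}\|\,\|{\bf a}^{*}\|\le e^{\|H\||z|}\|{\bf a}\|\,\|{\bf a}^{*}\| = e^{\pi|z|}\|{\bf a}\|\,\|{\bf a}^{*}\|,
$$
so $\Phi$ is entire of exponential type at most $\pi$. For membership in ${\bf B}_{\pi}^{\infty}(\mathbb{R})$ it remains to check that $\Phi$ is bounded on $\mathbb{R}$; but for real $t$ the operator $e^{tH}$ is an isometry of $l^{2}$ (the theorem of De Carli–Samad quoted above), so $|\Phi(t)|\le \|e^{tH}{\bf a}\|\,\|{\bf a}^{*}\|=\|{\bf a}\|\,\|{\bf a}^{*}\|$ for all $t\in\mathbb{R}$, i.e. $\Phi\in L^{\infty}(\mathbb{R})$ with $\|\Phi\|_{L^{\infty}(\mathbb{R})}\le\|{\bf a}\|\,\|{\bf a}^{*}\|$. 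Combining the two facts, $\Phi$ is an entire function of exponential type $\pi$ whose restriction to $\mathbb{R}$ lies in $L^{\infty}(\mathbb{R})$, which is precisely the statement that $\Phi\in{\bf B}_{\pi}^{\infty}(\mathbb{R})$.

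I do not anticipate a serious obstacle here: the only mild point of care is to justify interchanging the inner product with the (operator-norm convergent) exponential series, which follows from continuity of $\langle\cdot,{\bf a}^{*}\rangle$ as a bounded linear functional, and to be slightly careful that "exponential type $\pi$" in the Bernstein-class sense allows the type to be attained — but the characterization via the Bernstein inequality $\|f^{(k)}\|_{L^\infty}\le\sigma^k\|f\|_{L^\infty}$ can also be verified directly, since $\Phi^{(k)}(t)=\langle H^{k}e^{tH}{\bf a},{\bf a}^{*}\rangle$ and hence $|\Phi^{(k)}(t)|\le\|H^{k}e^{tH}{\bf a}\|\,\|{\bf a}^{*}\|\le\pi^{k}\|{\bf a}\|\,\|{\bf a}^{*}\|\le\pi^{k}\|\Phi\|_{L^\infty(\mathbb{R})}$ would not quite close unless one is careful, so I would prefer the Paley–Wiener/growth argument above, which is clean and self-contained.
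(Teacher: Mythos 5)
Your proof is correct and follows essentially the same route as the paper: both establish that the Taylor series of $\Phi$ converges on all of $\mathbb{C}$ with the bound $|\Phi(z)|\le \|{\bf a}\|\,\|{\bf a}^{*}\|\,e^{\pi|z|}$ (hence exponential type $\pi$), and both obtain boundedness on $\mathbb{R}$ from $|\langle e^{tH}{\bf a},{\bf a}^{*}\rangle|\le\|{\bf a}\|\,\|{\bf a}^{*}\|$ using that $e^{tH}$ preserves the $l^{2}$ norm. No gaps.
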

\begin{proof}

We  notice that
$$
\left(\frac{{d}}{dt}\right)^{k}\Phi|_{t=0}=\langle H^{k}{\bf a}, {\bf a}^{*}\rangle.
$$
Since the operator norm of $H$ is $\pi$  we obtain that the Taylor series for $\Phi$ converges absolutely on $\mathbb{C}$:
\begin{equation}\label{type-pi}
\left |\sum_{k=0}\left(\frac{{d}}{dt}\right)^{k}\Phi|_{t=0}\frac{z^{k}}{k!}\right|\leq 
\sum_{k=0}\left|\langle H^{k}{\bf a}, {\bf a}^{*}\rangle\right|\frac{|z|^{k}}{k!}= \|{\bf a}\| \|{\bf a^{*}}\|\>e^{\pi |z|},
\end{equation}
and represents there a function of the exponential type $\pi$. 
In addition, the function $\Phi$ is bounded on the real line
$$
\left|\Phi(t)\right|=\left|\langle e^{tH}{\bf a}, {\bf a}^{*}\rangle\right|\leq \|{\bf a}\|\|{\bf a}^{*}\|.
$$
 Lemma is proven.

\end{proof}

This Lemma can also be reformulated as follows.
\begin{col}\label{COL}
For a fixed ${\bf a}\in l^{2}$ the vector-valued function
\begin{equation}
e^{tH}{\bf a}: \mathbb{R}\mapsto l^{2}
\end{equation}
has extension $ e^{z H}{\bf a},\>\>z\in \mathbb{C},$ to the complex plain as an entire function of the exponential type $\pi$ which is bounded on the real line.
\end{col}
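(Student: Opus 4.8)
The plan is to recognize that Corollary \ref{COL} is simply the vector-valued shadow of Lemma \ref{LEM}, obtained by stripping off the pairing with ${\bf a}^{*}$. First I would define, for $z\in\mathbb{C}$,
\[
F(z)=\sum_{k=0}^{\infty}\frac{z^{k}}{k!}H^{k}{\bf a},
\]
and observe that, thanks to the Bernstein-type inequality \eqref{bern-1}, the partial sums form a Cauchy sequence in $l^{2}$ for every $z$, because $\sum_{k}\frac{|z|^{k}}{k!}\|H^{k}{\bf a}\|\leq\|{\bf a}\|\sum_{k}\frac{(\pi|z|)^{k}}{k!}=\|{\bf a}\|e^{\pi|z|}<\infty$. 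Hence $F$ is a well-defined $l^{2}$-valued function on $\mathbb{C}$; being the sum of an everywhere-convergent power series, it is strongly entire; and for real $z=t$ it coincides with $e^{tH}{\bf a}$ since the group $e^{tH}$ is defined by the very same series. This produces the required entire extension $e^{zH}{\bf a}$.

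Next I would read off the two growth properties. For the exponential type, the estimate just displayed already gives
\[
\|F(z)\|\leq\sum_{k=0}^{\infty}\frac{|z|^{k}}{k!}\|H^{k}{\bf a}\|\leq\|{\bf a}\|\,e^{\pi|z|},\qquad z\in\mathbb{C},
\]
which is the vector-valued analogue of \eqref{bound} sharpened by \eqref{bern-1}, so $F$ has exponential type at most $\pi$. For boundedness on the real line I would invoke the fact, recalled above, that each $e^{tH}$ is an isometry of $l^{2}$: thus $\|F(t)\|=\|e^{tH}{\bf a}\|=\|{\bf a}\|$ for every $t\in\mathbb{R}$, which is in fact stronger than mere boundedness.

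For internal consistency with Lemma \ref{LEM} I would note that for any ${\bf a}^{*}\in l^{2}$ the scalar function $z\mapsto\langle F(z),{\bf a}^{*}\rangle$ is entire and equals the analytic continuation of $\Phi$, so taking absolute values recovers \eqref{type-pi}; conversely, since $\|F(z)\|=\sup_{\|{\bf a}^{*}\|\leq1}|\langle F(z),{\bf a}^{*}\rangle|$, Lemma \ref{LEM} together with the standard fact that a weakly holomorphic Banach-space-valued function is norm-holomorphic would furnish an alternative derivation of the Corollary. I do not anticipate any genuine obstacle: the only point requiring care is to fix once and for all what "entire function of exponential type $\pi$" means for an $l^{2}$-valued map — namely a strongly entire $F$ with $\|F(z)\|\leq Ce^{\pi|z|}$ — after which both assertions are immediate from \eqref{bern-1} and the isometry property of $e^{tH}$.
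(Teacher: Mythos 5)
Your proposal is correct and follows essentially the same route as the paper: the paper obtains the Corollary as a direct reformulation of Lemma \ref{LEM}, whose proof rests on exactly the estimate you use, namely $\sum_{k}\|H^{k}{\bf a}\|\,|z|^{k}/k!\leq\|{\bf a}\|e^{\pi|z|}$ coming from $\|H\|=\pi$, together with boundedness on $\mathbb{R}$. Your only additions — carrying out the estimate directly at the $l^{2}$-valued level and invoking the isometry of $e^{tH}$ to get the sharper identity $\|e^{tH}{\bf a}\|=\|{\bf a}\|$ — are harmless refinements of the same argument.
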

We already observed that the function  $\Phi$ for any ${\bf a},\>{\bf a}^{*}\in l^{2}$ belongs to  ${\bf B}_{\pi}^{\infty}(\mathbb{R})$. Let us introduce a new function defined by the next formula if $t\neq 0$ 
\begin{equation}\label{F1}
\Psi(t)=\frac{\Phi(t)-\Phi(0)}{t}=\left\langle \frac{e^{tH}{\bf a}-{\bf a}}{t}, {\bf a}^{*}\right\rangle,
\end{equation}
and in the case $t=0$ by the formula
\begin{equation}\label{F2}
\Psi(0)=\frac{d}{dt}\Phi(t)|_{t=0}=\langle H{\bf a}, {\bf a}^{*}\rangle.
\end{equation}
\begin{lem}
For every ${\bf a}\in l^{2},\>{\bf a}^{*}\in l^{2}$ the function $\Psi$ is in the Bernstein class ${\bf B}_{\sigma}^{2}(\mathbb{R})$.
\end{lem}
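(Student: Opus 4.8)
The plan is to work with the entire extension of $\Phi$ furnished by Lemma \ref{LEM} and Corollary \ref{COL}, and to recognize $\Psi$ as the quotient of that extension by a linear factor vanishing at one of its points. Concretely, I would first set $\Psi(z) = (\Phi(z) - \Phi(0))/z$ for $z \in \mathbb{C}\setminus\{0\}$ and $\Psi(0) = \Phi'(0) = \langle H{\bf a}, {\bf a}^{*}\rangle$; since $\Phi(z) - \Phi(0)$ is entire and vanishes at $z = 0$, the singularity at the origin is removable, so $\Psi$ is entire, and on the real line it agrees with the function defined by (\ref{F1})--(\ref{F2}). It then remains to verify the two defining properties of ${\bf B}_{\pi}^{2}(\mathbb{R})$ --- exponential type $\pi$ and membership in $L^{2}(\mathbb{R})$ --- and to quote the Paley-Wiener characterization recalled above.

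For the exponential type I would split by $|z|$. On $|z| \ge 1$, using (\ref{type-pi}) and $|z|^{-1}\le 1$,
$$
|\Psi(z)| \le \frac{|\Phi(z)| + |\Phi(0)|}{|z|} \le \|{\bf a}\|\,\|{\bf a}^{*}\|\bigl(e^{\pi|z|}+1\bigr)\le 2\|{\bf a}\|\,\|{\bf a}^{*}\|\,e^{\pi|z|};
$$
on the closed unit disk $\Psi$ is continuous, hence bounded, say by $M$, so there too $|\Psi(z)|\le M e^{\pi|z|}$. Thus $|\Psi(z)|\le C e^{\pi|z|}$ on all of $\mathbb{C}$, i.e. $\Psi$ is entire of exponential type at most $\pi$ --- this is just the standard fact that removing a zero of an exponential-type function by a linear factor does not raise the type.

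For square-integrability I would use that Lemma \ref{LEM} gives $|\Phi(t)|\le\|{\bf a}\|\,\|{\bf a}^{*}\|$ for real $t$, so for $|t|\ge 1$
$$
|\Psi(t)| = \frac{|\Phi(t)-\Phi(0)|}{|t|}\le\frac{2\|{\bf a}\|\,\|{\bf a}^{*}\|}{|t|},
$$
whence $\int_{|t|\ge 1}|\Psi(t)|^{2}\,dt\le 8\|{\bf a}\|^{2}\|{\bf a}^{*}\|^{2}\int_{1}^{\infty} t^{-2}\,dt<\infty$; meanwhile $\Psi$ is continuous and therefore bounded on $[-1,1]$, so $\int_{-1}^{1}|\Psi(t)|^{2}\,dt<\infty$. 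Hence $\Psi\in L^{2}(\mathbb{R})$, and combining this with the type estimate and the Paley-Wiener theorem yields $\Psi\in{\bf B}_{\pi}^{2}(\mathbb{R})$ (so one may take $\sigma=\pi$ in the statement).

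The only real subtlety I anticipate is the $L^{2}$ bound: Lemma \ref{LEM} supplies boundedness of $\Phi$ on the line but never $L^{2}$-membership, and the whole point is that dividing the bounded quantity $\Phi(t)-\Phi(0)$ by $t$ produces exactly the $O(1/t)$ decay that is (just barely) square-integrable near infinity, while the removable singularity keeps $\Psi$ bounded --- hence harmless --- near the origin. Everything else is routine.
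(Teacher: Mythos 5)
Your proof is correct and follows essentially the same route as the paper: recognize the singularity at $0$ as removable, obtain the $O(1/|t|)$ decay of $\Psi$ on $|t|\ge 1$ from the boundedness $|\Phi(t)|\le\|{\bf a}\|\|{\bf a}^{*}\|$, and conclude $L^{2}$-membership together with exponential type $\pi$ (the paper's $\sigma$ in the statement is indeed $\pi$). The only cosmetic difference is that the paper verifies the type via the Taylor-coefficient criterion $\overline{\lim}_{k}\sqrt[k]{k!|c_{k}|}\le\pi$ applied to the shifted coefficients of $\Phi$, whereas you use the direct modulus bound $|\Psi(z)|\le Ce^{\pi|z|}$; both are standard and equivalent here.
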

\begin{proof}
The function $\Psi$ is an entire function of the  exponential type $\pi$. Indeed, the fact that  
$
\Phi(t)
$ 
is in ${\bf B}_{\pi}^{\infty}(\mathbb{R})$ means \cite{Nik} that
$$
\langle e^{tH}{\bf a},{\bf a}^{*}\rangle=\langle {\bf a},{\bf a}^{*}\rangle+\sum_{k=1}^{\infty} c_{k}t^{k},
$$
with $\>\>\>\overline{\lim}_{k\rightarrow \infty}\sqrt[k]{k!|c_{k}|}\leq \pi $, and then
$$
\Psi(t)=\frac{\langle e^{tH}{\bf a},{\bf a}^{*}\rangle -\langle {\bf a},{\bf a}^{*}\rangle}{t}=\sum_{k=1}^{\infty} c_{k}t^{k-1},
$$
where one obviously has  $\>\>\>\overline{\lim}_{k\rightarrow \infty}\sqrt[k]{k!|c_{k+1}|}\leq \pi $.
In addition, $\Psi$ belongs to $L^{2}(\mathbb{R})$  since  according to the Schwartz  inequality
$$
\left|\Psi(t)\right|^{2}=\left|\left\langle \frac{e^{tH}{\bf a}-{\bf a}}{t}, {\bf a}^{*}\right\rangle\right|^{2}\leq \frac{\left(2\|{\bf a}\|\|{\bf a}^{*}\|\right)^{2}}{|t|^{2}},\>\>\>t\geq 1.
$$
 In other words, $\Psi$ is in the Bernstein class ${\bf B}_{\sigma}^{2}(\mathbb{R})$.  Lemma is proved.
 \end{proof}
 
 We also have the following version of the so-called General Parseval formula \cite{BFHSS}.
  \begin{thm}
 For every ${\bf a},\>{\bf a}^{*}, {\bf b},\>{\bf b}^{*}\in l^{2}$ the next equality holds 
 $$
 \int_{\mathbb{R}}\left\langle \frac{e^{tH}{\bf a}-{\bf a}}{t}, {\bf a}^{*}\right\rangle  \left\langle  \frac{e^{t H}{\bf b}-{\bf b}}{t}, {\bf b}^{*}\right\rangle\>dt=
 $$
 $$
  \left\langle H{\bf a}, {\bf a}^{*}\right\rangle \left\langle H{\bf b}, {\bf b}^{*}\right\rangle +\sum_{k\neq 0} \left\langle \frac{e^{kH}{\bf a}-{\bf a}}{k}, {\bf a}^{*}\right\rangle   \left\langle  \frac{e^{kH}{\bf b}-{\bf b}}{k}, {\bf b}^{*}\right\rangle.
 $$
 \end{thm}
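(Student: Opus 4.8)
The plan is to reduce the identity to the classical General Parseval formula for functions in the Bernstein space $\mathbf{B}_{\pi}^{2}(\mathbb{R})$, applied to the two scalar functions
$$
\Psi_{1}(t)=\left\langle \frac{e^{tH}{\bf a}-{\bf a}}{t},{\bf a}^{*}\right\rangle,
\qquad
\Psi_{2}(t)=\left\langle \frac{e^{tH}{\bf b}-{\bf b}}{t},{\bf b}^{*}\right\rangle,
$$
each of which, by the previous Lemma, lies in $\mathbf{B}_{\pi}^{2}(\mathbb{R})$. First I would recall the precise form of the classical result from \cite{BFHSS}: if $f,g\in \mathbf{B}_{\pi}^{2}(\mathbb{R})$ then
$$
\int_{\mathbb{R}} f(t)\,\overline{g(t)}\,dt=\sum_{k\in\mathbb{Z}} f(k)\,\overline{g(k)},
$$
and more generally, since our functions are complex-valued and the statement is written without conjugation, one uses the bilinear version $\int_{\mathbb{R}} f(t)g(t)\,dt=\sum_{k\in\mathbb{Z}} f(k)g(k)$, valid because $f g\in\mathbf{B}_{2\pi}^{1}(\mathbb{R})$ and one may invoke the classical sampling/quadrature identity for the type $2\pi$, $L^{1}$ case (this is exactly the setting in which the General Parseval formula is stated in \cite{BFHSS}). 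Applying this with $f=\Psi_{1}$, $g=\Psi_{2}$ gives
$$
\int_{\mathbb{R}}\Psi_{1}(t)\Psi_{2}(t)\,dt=\sum_{k\in\mathbb{Z}}\Psi_{1}(k)\Psi_{2}(k).
$$

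Next I would separate the $k=0$ term from the sum. By the definition \eqref{F2} of the extension at the origin, $\Psi_{1}(0)=\langle H{\bf a},{\bf a}^{*}\rangle$ and $\Psi_{2}(0)=\langle H{\bf b},{\bf b}^{*}\rangle$, so the $k=0$ contribution is exactly $\langle H{\bf a},{\bf a}^{*}\rangle\langle H{\bf b},{\bf b}^{*}\rangle$. For $k\neq 0$, formula \eqref{F1} gives $\Psi_{1}(k)=\left\langle \dfrac{e^{kH}{\bf a}-{\bf a}}{k},{\bf a}^{*}\right\rangle$ and similarly for $\Psi_{2}(k)$, which is precisely the series on the right-hand side of the claimed identity. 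Substituting back the definitions of $\Psi_{1},\Psi_{2}$ on the left-hand side yields the integral appearing in the statement, and the theorem follows.

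The main technical point — the part that needs the most care rather than the part that is genuinely hard — is justifying the bilinear General Parseval identity for the product $\Psi_{1}\Psi_{2}$: one must check that this product is an entire function of exponential type $2\pi$ that is integrable on $\mathbb{R}$, so that the quadrature formula applies. Type $2\pi$ is immediate from the type-$\pi$ property of each factor; integrability follows from the Cauchy–Schwarz inequality $\int|\Psi_{1}\Psi_{2}|\le \|\Psi_{1}\|_{L^{2}}\|\Psi_{2}\|_{L^{2}}<\infty$, using that each $\Psi_{i}\in L^{2}(\mathbb{R})$ by the preceding Lemma. One should also note that the pointwise values $\Psi_{i}(k)$ used in the sampling formula agree with the $l^{2}$-inner-product expressions above: for $k\ne 0$ this is the definition, and for $k=0$ it is the content of \eqref{F2}, which is exactly the continuity of $t\mapsto \Psi_{i}(t)$ at $0$ guaranteed by property (3) of the group $e^{tH}$ together with boundedness of the functionals $\langle\cdot,{\bf a}^{*}\rangle$, $\langle\cdot,{\bf b}^{*}\rangle$. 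With these routine verifications in place, the identity is just the classical General Parseval formula read off for $\Psi_{1}$ and $\Psi_{2}$.
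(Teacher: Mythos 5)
Your proposal is correct and follows exactly the route the paper intends: the paper states this theorem without proof, as an immediate consequence of the preceding lemma (that $\Psi\in{\bf B}_{\pi}^{2}(\mathbb{R})$) combined with the General Parseval formula of \cite{BFHSS}, which is precisely what you apply to $\Psi_{1}$ and $\Psi_{2}$. Your added care about the bilinear (unconjugated) version and the value at $k=0$ fills in details the paper leaves implicit.
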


\section{Sampling theorems with regularly spaced samples for orbits $e^{tH}{\bf a}$ }

Below we are going to use the following known fact (see  \cite{BSS}).

\begin{thm}\label{Shannon-1}

If 
 $h\in {\bf B}_{\sigma}^{\infty}(\mathbb{R})$, then for any $0<\gamma<1$ the following formula holds 

\begin{equation}
h(z)=\sum_{k\in \mathbb{Z}} h\left(\gamma\frac{k\pi}{\sigma}\right)\>
sinc\left(\gamma^{-1}\frac{\sigma}{\pi} z-k\right),\>\>\>z\in \mathbb{C},
\end{equation} 
where the series converges uniformly on compact subsets of $\mathbb{C}$.
\end{thm}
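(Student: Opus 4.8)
The plan is to pass, by a linear change of variable, to the classical cardinal series for a bounded entire function of exponential type strictly below the critical value $\pi$, and then to prove that series by a contour integration in which the strict inequality $\gamma<1$ supplies the decay that makes the symmetric partial sums converge. Concretely, put $w=\gamma^{-1}\frac{\sigma}{\pi}z$, which is a linear bijection of $\mathbb{C}$, and set $g(w)=h(\gamma\frac{\pi}{\sigma}w)$. Then $g\in L^{\infty}(\mathbb{R})$ with $\|g\|_{L^{\infty}(\mathbb{R})}=\|h\|_{L^{\infty}(\mathbb{R})}$, and from $g^{(k)}(w)=(\gamma\pi/\sigma)^{k}h^{(k)}(\gamma\frac{\pi}{\sigma}w)$ together with the Bernstein inequality for $h\in{\bf B}_{\sigma}^{\infty}(\mathbb{R})$ recalled above one gets $\|g^{(k)}\|_{L^{\infty}(\mathbb{R})}\le(\gamma\pi)^{k}\|g\|_{L^{\infty}(\mathbb{R})}$; hence $g\in{\bf B}_{\gamma\pi}^{\infty}(\mathbb{R})$, in particular $g$ is entire of exponential type at most $\gamma\pi<\pi$. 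The sample points $\gamma\frac{k\pi}{\sigma}$ go to the integers, $g(k)=h(\gamma\frac{k\pi}{\sigma})$, and $sinc(\gamma^{-1}\frac{\sigma}{\pi}z-k)=sinc(w-k)$; since compact subsets of the $z$- and $w$-planes correspond under the bijection, the asserted formula is equivalent to the cardinal series
\[
g(w)=\sum_{k\in\mathbb{Z}}g(k)\,sinc(w-k),\qquad w\in\mathbb{C},
\]
with the symmetric partial sums converging uniformly on compact subsets of $\mathbb{C}$. So it suffices to establish this for an arbitrary $g\in{\bf B}_{\gamma\pi}^{\infty}(\mathbb{R})$, $0<\gamma<1$.

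Next I would record the growth bound
\[
|g(x+iy)|\le\|g\|_{L^{\infty}(\mathbb{R})}\,e^{\gamma\pi|y|},\qquad x,y\in\mathbb{R},
\]
which follows at once from the Bernstein inequality for $g$ by expanding $g(x+iy)=\sum_{k\ge0}g^{(k)}(x)(iy)^{k}/k!$ and estimating term by term (equivalently, it is a Phragm\'en--Lindel\"of argument in each quadrant). Then I would use a residue representation: for the square contour $C_{N}$ with vertices $\pm(N+\frac12)\pm i(N+\frac12)$, applying the residue theorem to $\frac{g(\zeta)}{(\zeta-w)\sin\pi\zeta}$, whose only poles inside $C_{N}$ are the simple ones at $\zeta=w$ and at $\zeta=k$ with $|k|\le N$, and multiplying through by $\sin\pi w$, yields
\[
g(w)-\sum_{|k|\le N}g(k)\,sinc(w-k)=\frac{\sin\pi w}{2\pi i}\oint_{C_{N}}\frac{g(\zeta)}{(\zeta-w)\sin\pi\zeta}\,d\zeta.
\]

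Finally I would estimate the right-hand side. On $C_{N}$ one has the elementary lower bound $|\sin\pi\zeta|\ge\frac12 e^{\pi|\mathrm{Im}\,\zeta|}$: on the vertical sides $\mathrm{Re}\,\zeta=\pm(N+\frac12)$ one computes $|\sin\pi\zeta|=\cosh(\pi\,\mathrm{Im}\,\zeta)$, and on the horizontal sides $\mathrm{Im}\,\zeta=\pm(N+\frac12)$ one has $|\sin\pi\zeta|\ge\sinh(\pi(N+\frac12))$. Combined with the growth bound for $g$ this makes the integrand at most $2\|g\|_{L^{\infty}(\mathbb{R})}\,e^{-(1-\gamma)\pi|\mathrm{Im}\,\zeta|}/|\zeta-w|$. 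For $w$ in a fixed compact set $K$ and $N$ large one has $|\zeta-w|\ge N/2$ on all of $C_{N}$, so the two horizontal sides contribute a quantity decaying exponentially in $N$, while the two vertical sides contribute $O\!\left(N^{-1}\int_{-\infty}^{\infty}e^{-(1-\gamma)\pi|y|}\,dy\right)=O(N^{-1})$, since $1-\gamma>0$. Hence the contour integral tends to $0$ as $N\to\infty$, uniformly for $w\in K$; this proves the cardinal series and, undoing the change of variables, the theorem. The steps that need care are the uniform growth estimate off the real axis and the bookkeeping that keeps the final contour bound uniform in $w$; note that it is precisely the \emph{strict} inequality $\gamma<1$ that produces the decisive decay factor $e^{-(1-\gamma)\pi|\mathrm{Im}\,\zeta|}$, whereas at $\gamma=1$ the plain sinc series need not converge at all.
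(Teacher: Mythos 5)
Your argument is correct, but note that the paper does not prove this statement at all: Theorem \ref{Shannon-1} is quoted as a known fact with a citation to \cite{BSS}, so there is no in-paper proof to compare against. What you supply is the standard self-contained derivation of the oversampled cardinal series: rescale to $g(w)=h(\gamma\pi w/\sigma)\in{\bf B}_{\gamma\pi}^{\infty}(\mathbb{R})$, establish the growth bound $|g(x+iy)|\le\|g\|_{\infty}e^{\gamma\pi|y|}$ from the Bernstein inequality, and kill the remainder by integrating $g(\zeta)/\bigl((\zeta-w)\sin\pi\zeta\bigr)$ over expanding squares, where the strict oversampling $\gamma<1$ yields the decay $e^{-(1-\gamma)\pi|\mathrm{Im}\,\zeta|}$ that makes the vertical-side contribution $O(N^{-1})$. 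The residue bookkeeping, the identity $(-1)^{k}\sin\pi w/(\pi(w-k))=sinc(w-k)$, and the uniformity in $w$ over a compact set are all handled properly; this is exactly the kind of argument found in \cite{BSS}. Two cosmetic points you may wish to tidy: the bound $|\sin\pi\zeta|\ge\tfrac12 e^{\pi|\mathrm{Im}\,\zeta|}$ fails by a few percent on the horizontal sides (since $\sinh(\pi(N+\tfrac12))=\tfrac12 e^{\pi(N+1/2)}(1-e^{-2\pi(N+1/2)})$), so replace $\tfrac12$ by, say, $\tfrac13$; and the residue computation tacitly assumes $w\notin\mathbb{Z}$, the integer case being trivial and recovered by continuity. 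Neither affects the validity of the proof.
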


By using Theorem \ref{Shannon-1} and Lemma \ref{LEM} we obtain 
our First Sampling Theorem. 
\begin{thm}\label{FST}For  every ${\bf a},\>{\bf a}^{*}\in l^{2}$, every $0<\gamma< 1,$ and every $z\in \mathbb{C}$ one has

\begin{equation}\label{1-2}
\left\langle e^{zH}{\bf a}, {\bf a}^{*}\right\rangle=\sum_{k\in \mathbb{Z}} \left\langle e^{(\gamma k) H }{\bf a}, {\bf a}^{*}\right\rangle\>
sinc\left(\gamma^{-1}z -k\right),
\end{equation}
where the series converges uniformly on compact subsets of $\mathbb{R}$.
\end{thm}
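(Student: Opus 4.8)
The plan is to apply Theorem \ref{Shannon-1} directly to the scalar function $\Phi(z) = \langle e^{zH}{\bf a}, {\bf a}^*\rangle$, which by Lemma \ref{LEM} (and its extension to $\mathbb{C}$ via Corollary \ref{COL}) lies in the Bernstein class ${\bf B}_\pi^\infty(\mathbb{R})$. Thus I take $\sigma = \pi$ in Theorem \ref{Shannon-1}; the sampling formula then reads $\Phi(z) = \sum_{k\in\mathbb{Z}} \Phi(\gamma k)\, \mathrm{sinc}(\gamma^{-1} z - k)$, since the substitution $\sigma = \pi$ collapses $\gamma k\pi/\sigma$ to $\gamma k$ and $\gamma^{-1}(\sigma/\pi) z$ to $\gamma^{-1} z$.

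The next step is simply to identify the sample values: $\Phi(\gamma k) = \langle e^{(\gamma k)H}{\bf a}, {\bf a}^*\rangle$ by definition of $\Phi$. Substituting this into the formula gives exactly \eqref{1-2}. The convergence statement is inherited verbatim from Theorem \ref{Shannon-1}, which asserts uniform convergence on compact subsets of $\mathbb{C}$ (hence a fortiori on compact subsets of $\mathbb{R}$).

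In short, there is essentially no obstacle here: the theorem is a direct specialization of the known classical result in Theorem \ref{Shannon-1} to the concrete Bernstein-class function produced by Lemma \ref{LEM}. The only point requiring a word of care is the verification that $\Phi$ genuinely has exponential type $\pi$ on all of $\mathbb{C}$ and is bounded on $\mathbb{R}$ — but this is precisely the content of Lemma \ref{LEM} together with Corollary \ref{COL}, so it may be invoked without further argument. I would present the proof in three or four lines: recall $\Phi \in {\bf B}_\pi^\infty(\mathbb{R})$, apply Theorem \ref{Shannon-1} with $\sigma=\pi$, rewrite the samples, and note the convergence.

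\begin{proof}
Fix ${\bf a}, {\bf a}^* \in l^2$ and set $\Phi(z) = \langle e^{zH}{\bf a}, {\bf a}^*\rangle$. By Lemma \ref{LEM} together with Corollary \ref{COL}, the function $\Phi$ belongs to the Bernstein class ${\bf B}_\pi^\infty(\mathbb{R})$; that is, it is an entire function of exponential type $\pi$ which is bounded on the real line. Applying Theorem \ref{Shannon-1} with $\sigma = \pi$, we obtain for every $0 < \gamma < 1$ and every $z \in \mathbb{C}$ the identity
\begin{equation}
\Phi(z) = \sum_{k\in\mathbb{Z}} \Phi(\gamma k)\, sinc\left(\gamma^{-1} z - k\right),
\end{equation}
with the series converging uniformly on compact subsets of $\mathbb{C}$, and in particular on compact subsets of $\mathbb{R}$. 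Since $\Phi(\gamma k) = \langle e^{(\gamma k)H}{\bf a}, {\bf a}^*\rangle$ by the definition of $\Phi$, substituting this into the displayed formula yields precisely \eqref{1-2}. The theorem is proved.
\end{proof}
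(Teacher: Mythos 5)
Your proof is correct and is exactly the paper's argument: the author likewise obtains Theorem \ref{FST} by applying Theorem \ref{Shannon-1} with $\sigma=\pi$ to the function $\Phi(t)=\langle e^{tH}{\bf a},{\bf a}^{*}\rangle$, whose membership in ${\bf B}_{\pi}^{\infty}(\mathbb{R})$ is supplied by Lemma \ref{LEM}. You have merely written out the specialization that the paper leaves implicit, so there is nothing to add.
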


Explicitly, the formula (\ref{1-2}) means that 
if $z$ in (\ref{1-2}) is a real $z=t$ which is not an integer then 
  (\ref{1-2}) takes the form
\begin{equation}
\frac{\sin \pi t}{\pi}\sum_{m\in \mathbb{Z}} \sum_{n\in \mathbb{Z}\>\>n\neq m}   \frac{a_{n}b_{m}} {m-n+t}=S_{1}+S_{2}, 
\end{equation}
where
$$
S_{1}=\sum_{k\in \mathbb{Z},\> \gamma k\in \mathbb{R}\setminus\mathbb{Z}}\frac{\sin \pi \gamma k}{\pi}\left(    \sum_{m\in \mathbb{Z}}\sum_{n\in \mathbb{Z}\>\>n\neq m}  \frac{a_{n}b_{m}}{m-n+\gamma k      } \right)\>sinc(\gamma^{-1}t-k),
$$
and 
$$
S_{2}=\sum_{k\in \mathbb{Z};\>\gamma k\in\mathbb{Z}} \left( \sum_{m\in \mathbb{Z}}(-1)^{\gamma k}a_{m+\gamma k}b_{m}\right)\>
sinc(\gamma^{-1}t-k).
$$

Next, if $z=t$ in (\ref{1-2}) is an integer   then the formula  (\ref{1-2}) is given by 
$$
\sum_{m\in \mathbb{Z}}(-1)^{t}a_{m+t}b_{m}=S_{1}+S_{2}.
$$
We note, that if $t=\gamma N,\>\>N\in \mathbb{Z}$, then (\ref{1-2}) is evident since its both sides are (obviously) identical:
$$
\langle e^{tH}{\bf a}, {\bf a}^{*}\rangle=\langle e^{tH}{\bf a}, {\bf a}^{*}\rangle.
$$

\begin{rem}
The situation with such kind "obvious interpolation"   is very common for sampling formulas. Consider for example 
 the following classical (Shannon) formula 
\begin{equation}
f(t)=\sum_{k\in \mathbb{Z}} f\left(k\right)\>sinc\left(t-k\right),
\end{equation}
for  $f\in {\bf B}_{\pi}^{2}(\mathbb{R}),$ where the series converges uniformly on compact subsets of $\mathbb{R}$ and also in  $L^{2}(\mathbb{R})$. This formula is informative only when $t$ is not integer. When $t=N\in \mathbb{Z},$ it clearly becomes a tautology $f(N)=f(N)$ because $sinc \>z$ is zero for every $z\in \mathbb{Z}\setminus \{0\}$ and $sinc\> 0=1$.
\end{rem}

 We are going to use the 
 next known result (see\cite{BSS})

\begin{thm}\label{Shannon-2}

If $\>h\in {\bf B}_{\sigma}^{2}(\mathbb{R})\>$ then the following formula holds for $z\in \mathbb{C}$ 
\begin{equation}
h(t)=\sum_{k\in \mathbb{Z}} h\left(\frac{k\pi}{\sigma}\right)\>sinc\left(\frac{\sigma}{ \pi}t-k\right),
\end{equation}
where the series converges uniformly on compact subsets of $\mathbb{R}$. The restriction of the series to the real line also converges  in $L^{2}(\mathbb{R})$.
\end{thm}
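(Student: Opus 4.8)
\emph{Proof proposal.} The plan is to reduce the statement to the Fourier--series expansion of the band--limited spectrum, using the Paley--Wiener characterization already recalled above, and then to verify separately the two asserted modes of convergence. First I would rescale: putting $g(t)=h(\pi t/\sigma)$, the Bernstein inequality (equivalently the Paley--Wiener theorem) gives $g\in{\bf B}_{\pi}^{2}(\mathbb{R})$ with $g(k)=h(k\pi/\sigma)$, so after the substitution $s=\sigma t/\pi$ it suffices to prove the formula in the case $\sigma=\pi$. By Paley--Wiener, $h\in{\bf B}_{\pi}^{2}(\mathbb{R})$ means $h\in L^{2}(\mathbb{R})$ and $\widehat{h}$ is supported in $[-\pi,\pi]$; thus $\widehat{h}\in L^{2}[-\pi,\pi]$ and expands in the orthonormal basis $\{(2\pi)^{-1/2}e^{ik\xi}\}_{k\in\mathbb{Z}}$ of that space. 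Fourier inversion identifies the $k$-th coefficient of $\widehat h$ with $h(-k)$ (this is also where one sees, via Bessel/Parseval, that $\{h(k)\}_{k}\in\ell^{2}$), so $\widehat{h}(\xi)=\sum_{k}h(k)(2\pi)^{-1/2}e^{-ik\xi}$ with convergence in $L^{2}[-\pi,\pi]$.

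Next I would apply the inverse Fourier transform term by term. Since the partial sums converge in $L^{2}[-\pi,\pi]$ and $\xi\mapsto e^{i\xi t}$ belongs to $L^{2}[-\pi,\pi]$, the pairing $\frac{1}{\sqrt{2\pi}}\int_{-\pi}^{\pi}(\cdot)\,e^{i\xi t}\,d\xi$ is $L^{2}$-continuous and may be exchanged with the summation, yielding
$$
h(t)=\sum_{k}h(k)\,\frac{1}{2\pi}\int_{-\pi}^{\pi}e^{i\xi(t-k)}\,d\xi=\sum_{k}h(k)\,sinc(t-k).
$$
This is the asserted identity for real $t$; the extension to $z\in\mathbb{C}$ follows either by noting that both sides are entire functions of exponential type $\le\pi$ agreeing on $\mathbb{R}$, or by repeating the same $L^{2}$-interchange with $e^{i\xi z}$ in place of $e^{i\xi t}$.

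For the convergence claims the crucial elementary fact is the Parseval identity $\sum_{k\in\mathbb{Z}}|sinc(t-k)|^{2}=1$ for every real $t$, obtained by applying Parseval on $[-\pi,\pi]$ to the unit vector $\xi\mapsto(2\pi)^{-1/2}e^{i\xi t}$ (alternatively from $\sum_k(t-k)^{-2}=\pi^{2}/\sin^{2}\pi t$). Combined with Cauchy--Schwarz this controls the tail uniformly in $t$:
$$
\sum_{|k|>N}|h(k)|\,|sinc(t-k)|\le\Big(\sum_{|k|>N}|h(k)|^{2}\Big)^{1/2}\longrightarrow 0,
$$
since $\{h(k)\}\in\ell^{2}$; hence the series converges uniformly on all of $\mathbb{R}$, in particular on compact subsets. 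The $L^{2}(\mathbb{R})$-convergence of the restricted series is then immediate from the Plancherel isometry, which carries the $L^{2}[-\pi,\pi]$-partial sums of $\widehat{h}$ onto the corresponding partial sums of $\sum_{k}h(k)\,sinc(\cdot-k)$ in $L^{2}(\mathbb{R})$.

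I do not anticipate any genuine obstacle: the only points needing care are the bookkeeping of the normalization constant in $\widehat{\cdot}$, the justification of the term-by-term inverse transform (supplied by $L^{2}$-continuity), and the membership $\{h(k)\}\in\ell^{2}$, which is part of the Parseval picture above; since the statement is quoted as known, these details may also simply be referenced to \cite{BSS}.
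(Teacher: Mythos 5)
Your proposal is correct. Note, however, that the paper itself gives no proof of this statement: Theorem \ref{Shannon-2} is quoted as a known result with a pointer to \cite{BSS}, so there is no internal argument to compare against. What you have written is the standard Paley--Wiener proof of the Shannon sampling theorem: expand $\widehat{h}\in L^{2}[-\pi,\pi]$ in the exponential basis, identify the coefficients with the samples $h(k)$ via Fourier inversion, invert term by term, and get uniform convergence from $\sum_{k}|sinc(t-k)|^{2}=1$ together with Cauchy--Schwarz, and $L^{2}$-convergence from Plancherel. All the steps check out (including the reduction to $\sigma=\pi$ by rescaling and the extension to complex $z$ by pairing with $e^{i\xi z}$), and in fact you obtain uniform convergence on all of $\mathbb{R}$, which is stronger than the stated convergence on compacta. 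The only point you might make explicit is that pointwise Fourier inversion is legitimate here because $\widehat{h}\in L^{2}[-\pi,\pi]\subset L^{1}[-\pi,\pi]$ and $h$ is continuous, so the a.e.\ identity upgrades to a pointwise one; this is routine.
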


\begin{thm}\label{SST}For every ${\bf a}\in l^{2}$

\begin{equation}\label{2-2}
e^{tH}{\bf a}=
 {\bf a}+t\>sinc \>\left(t\right) H{\bf  a}\>+
t\sum_{k\in \mathbb{Z}\setminus \{0\}}  \frac{e^{kH}{\bf a}-{\bf a}}{k}
\>
sinc\left(t -k\right),
\end{equation}
where the series converges in the norm of $l^{2}$.

\end{thm}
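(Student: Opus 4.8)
\medskip

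\noindent\emph{Proof strategy.} The plan is to apply the scalar sampling formula of Theorem~\ref{Shannon-2} to the functions $\Psi$ introduced in (\ref{F1})--(\ref{F2}), thereby obtaining (\ref{2-2}) first in the weak topology of $l^{2}$, and then to upgrade the weak convergence of the sampling series to convergence in the norm of $l^{2}$ by a direct estimate.

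First I would fix ${\bf a}\in l^{2}$ and set $F(t)=\frac{e^{tH}{\bf a}-{\bf a}}{t}$ for $t\neq 0$ and $F(0)=H{\bf a}$, so that $\langle F(t),{\bf a}^{*}\rangle=\Psi(t)$ for every ${\bf a}^{*}\in l^{2}$; by the preceding Lemma $\Psi\in{\bf B}^{2}_{\pi}(\mathbb{R})$. Applying Theorem~\ref{Shannon-2} with $\sigma=\pi$ and splitting off the term $k=0$ gives, for every ${\bf a}^{*}$ and every $t\in\mathbb{R}$,
$$
\Psi(t)=\langle H{\bf a},{\bf a}^{*}\rangle\, sinc(t)+\sum_{k\neq 0}\left\langle \frac{e^{kH}{\bf a}-{\bf a}}{k},{\bf a}^{*}\right\rangle sinc(t-k),
$$
the series converging uniformly on compact subsets of $\mathbb{R}$. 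Multiplying by $t$ and using $t\,\langle F(t),{\bf a}^{*}\rangle=\langle e^{tH}{\bf a}-{\bf a},{\bf a}^{*}\rangle$ for $t\neq 0$ (while at $t=0$ both sides of (\ref{2-2}) reduce to ${\bf a}$) yields (\ref{2-2}), with the series understood in the weak topology of $l^{2}$.

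It then remains to show that $\sum_{k\neq 0}\frac{e^{kH}{\bf a}-{\bf a}}{k}\, sinc(t-k)$ converges in $\|\cdot\|$. For a finite index set $I$, a coordinatewise Cauchy--Schwarz inequality gives
$$
\left\|\sum_{k\in I}\frac{e^{kH}{\bf a}-{\bf a}}{k}\, sinc(t-k)\right\|^{2}\leq \left(\sum_{k\in I}|sinc(t-k)|^{2}\right)\left(\sum_{k\in I}\frac{\|e^{kH}{\bf a}-{\bf a}\|^{2}}{k^{2}}\right).
$$
Since every $e^{kH}$ is an isometry one has $\|e^{kH}{\bf a}-{\bf a}\|\leq 2\|{\bf a}\|$, and since $\sum_{k\in\mathbb{Z}}|sinc(t-k)|^{2}=1$ for every $t$ (the Parseval identity in ${\bf B}^{2}_{\pi}(\mathbb{R})$), the right-hand side is at most $4\|{\bf a}\|^{2}\sum_{k\in I}k^{-2}$. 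Taking $I=\{k:N<|k|\leq M\}$ and letting $N\to\infty$ shows the partial sums are Cauchy in $l^{2}$, so the series converges in $l^{2}$ to some $S(t)\in l^{2}$. A norm limit is a weak limit, so pairing $S(t)$ against an arbitrary ${\bf a}^{*}$ and comparing with the weak identity above identifies $t\, sinc(t)\, H{\bf a}+t\,S(t)=e^{tH}{\bf a}-{\bf a}$, which is exactly (\ref{2-2}) with convergence in the norm of $l^{2}$.

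The only genuine obstacle is this last passage from weak to norm convergence, and the displayed coordinatewise estimate settles it. An alternative route is to observe directly that $F$ is an $l^{2}$-valued entire function of exponential type $\pi$ lying in $L^{2}(\mathbb{R};l^{2})$ --- the $L^{2}$ bound again coming from $\|e^{tH}{\bf a}-{\bf a}\|\leq 2\|{\bf a}\|$ together with $\|e^{tH}{\bf a}-{\bf a}\|/|t|\leq 2\|{\bf a}\|/|t|$ for $|t|\geq 1$ --- and to invoke a vector-valued Shannon sampling theorem, or the group-theoretic framework of \cite{Pes15a,Pes15b}; but the coordinatewise bound keeps the argument entirely elementary and self-contained.
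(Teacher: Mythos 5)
Your proposal is correct and follows essentially the same route as the paper: apply Theorem~\ref{Shannon-2} to the function $\Psi$ from (\ref{F1})--(\ref{F2}), split off the $k=0$ term, multiply by $t$, verify that the vector-valued series converges in $l^{2}$, and then pass from the weak identity (valid for every ${\bf a}^{*}$) to the norm identity. The only cosmetic difference is your tail estimate, which uses Cauchy--Schwarz together with $\sum_{k}sinc^{2}(t-k)=1$, whereas the paper bounds the series termwise by $2\|{\bf a}\|\sum_{k\neq 0}\frac{1}{|k|\,|t-k|}$; both are valid and equally elementary.
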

\begin{proof} Since $\Psi$ is in ${\bf B}_{\sigma}^{2}(\mathbb{R})$ one can use Theorem \ref{Shannon-2} to 
obtain the following formula for every ${\bf a},\>{\bf a}^{*}\in l^{2}$, and every $t\in \mathbb{R}$,    

\begin{equation}\label{F11}
 \left\langle \frac{e^{tH}{\bf a}-{\bf a}}{t}, {\bf a}^{*}\right\rangle
=\sum_{k\in \mathbb{Z}} \left\langle \frac{e^{kH}{\bf a}-{\bf a}}{k}, {\bf a}^{*}\right\rangle
\>
sinc\left(t -k\right),
\end{equation}
where the series converges uniformly on compact subsets of $\mathbb{R}$.  Actually, this formula means that if $t\neq 0$ then 

\begin{equation}\label{F11a}
 \left\langle \frac{e^{tH}{\bf a}-{\bf a}}{t}, {\bf a}^{*}\right\rangle=\langle H{\bf a},{\bf a}^{*}\rangle\>sinc(t)+\sum_{k\in \mathbb{Z}\setminus\{0\}} \left\langle \frac{e^{kH}{\bf a}-{\bf a}}{k}, {\bf a}^{*}\right\rangle
\>
sinc\left(t -k\right),
\end{equation}
and for $t=0$ it becomes just 
$$
\langle H{\bf a},{\bf a}^{*}\rangle=\langle H{\bf a},{\bf a}^{*}\rangle.
$$
The formula  (\ref{F11a}) can be rewritten as 

\begin{equation}\label{F1-5}
\left\langle e^{tH}{\bf a}, {\bf a}^{*}\right\rangle=
$$
$$
\langle {\bf a}, {\bf a}^{*}\rangle+t\left\langle H{\bf a}, {\bf a}^{*}\right\rangle\>sinc \>(t)+
t\sum_{k\in \mathbb{Z}\setminus \{0\}} \left\langle \frac{e^{kH}{\bf a}-{\bf a}}{k}, {\bf a}^{*}\right\rangle
\>
sinc\left(t -k\right).
\end{equation}
Next, we notice that the series 
$$
\sum_{k\in \mathbb{Z}\setminus \{0\}} \frac{e^{kH}{\bf a}-{\bf a}}{k}
\>sinc\left(t -k\right),
$$
converges in $l^{2}$ since for every fixed $t\in \mathbb{R}$:
$$
\left\|\sum_{k\in \mathbb{Z}\setminus \{0\}} \frac{e^{kH}{\bf a}-{\bf a}}{k}
\>sinc\left(t -k\right)
\right\|\leq 2\|{\bf a}\|\sum_{k\neq 0, t} \frac{1}{|k||t-k|}<\infty.
$$
It allows to rewrite (\ref{F1-5}) as
$$
\left\langle e^{tH}{\bf a}, {\bf a}^{*}\right\rangle=
\left\langle {\bf a}+t H{\bf  a}\>sinc \>\left(t\right)+
t\sum_{k\in \mathbb{Z}\setminus \{0\}}  \frac{e^{kH}{\bf a}-{\bf a}}{k}
\>
sinc\left(t -k\right), {\bf a}^{*}\right\rangle.
$$
Since this equality holds for all sequences $\mathbf{a}^{*}\in l^{2}$ we obtain (\ref{2-2}). Theorem is proven.
\end{proof}

Next, we reformulate (\ref{2-2}) in its "native" terms.

\begin{prop}
If ${\bf a }=(a_{n})\in l^{2}$  and $t$ is not integer then the left-hand side of (\ref{2-2}) is a sequence $e^{tH}{\bf a}={\bf b}=(b_{m})\in l^{2}$ 
with the entries 
\begin{equation}\label{Sec-1}
b_{m}=\frac{\sin \pi t}{\pi}\sum_{n\in \mathbb{Z}}   \frac{a_{n}} {m-n+t},
\end{equation}
and the right-hand side represents a sequence ${\bf c}=(c_{m})\in l^{2}$  with the entries
\begin{equation}\label{sec-2}
c_{m}=a_{m}+
t\>sinc(t)\sum_{n\in \mathbb{Z}, n\neq m}\frac{a_{n}}{m-n}+
$$
$$
t\sum_{k\in \mathbb{Z}\setminus \{0\}}  
\frac{  (-1)^{k}a_{m+k}-a_{m}}{k}\>
sinc\left(t -k\right).
\end{equation}
If $t$ in  (\ref{2-2}) is an integer $t=N$  then $b_{m}=(-1)^{N}a_{m+N}$ and 
$$
c_{m}=a_{m}+
N\sum_{n\in \mathbb{Z}, n\neq m}\frac{a_{n}}{m-n}\>sinc(N)+
$$
$$
N\sum_{k\neq 0}  
\frac{  (-1)^{k}a_{m+k}-a_{m}}{k}\>
sinc\left(N -k\right)=(-1)^{N}a_{m+N}.
$$
Thus in the case when $t=N$ is an integer we obtain just a tautology
$$
b_{m}=(-1)^{N}a_{m+N}=c_{m}.
$$
\end{prop}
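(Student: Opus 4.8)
The plan is to simply translate the already-established vector identity \eqref{2-2} into coordinates, term by term, using the explicit formulas for the operators $e^{tH}$ recalled in the first theorem of Section~2. There is essentially no new analytic content here; the statement is a bookkeeping reformulation, and the only thing to be careful about is matching indices correctly and treating the two cases $t\notin\mathbb{Z}$ and $t\in\mathbb{Z}$ separately.

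First I would fix ${\bf a}=(a_n)\in l^2$ and compute the left-hand side of \eqref{2-2}. If $t\notin\mathbb{Z}$, the first theorem of Section~2 gives directly that $e^{tH}{\bf a}={\bf b}=(b_m)$ with $b_m=\frac{\sin\pi t}{\pi}\sum_{n\in\mathbb{Z}}\frac{a_n}{m-n+t}$, which is \eqref{Sec-1}. If $t=N\in\mathbb{Z}$, the same theorem gives $b_m=(-1)^N a_{m+N}$. Next I would compute the $m$-th coordinate of the right-hand side of \eqref{2-2}. The term ${\bf a}$ contributes $a_m$. For the term $t\,\mathrm{sinc}(t)\,H{\bf a}$, recall $H=\pi\widetilde{H}$, so the $m$-th coordinate of $H{\bf a}$ is $\pi\cdot\frac{1}{\pi}\sum_{n\neq m}\frac{a_n}{m-n}=\sum_{n\neq m}\frac{a_n}{m-n}$, giving the middle term of \eqref{sec-2}. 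For the summands $\frac{e^{kH}{\bf a}-{\bf a}}{k}$ with $k\in\mathbb{Z}\setminus\{0\}$, the integer case of the first theorem of Section~2 gives $(e^{kH}{\bf a})_m=(-1)^k a_{m+k}$, so the $m$-th coordinate of $\frac{e^{kH}{\bf a}-{\bf a}}{k}$ is $\frac{(-1)^k a_{m+k}-a_m}{k}$, which after multiplication by $t\,\mathrm{sinc}(t-k)$ and summation over $k\neq 0$ yields the last term of \eqref{sec-2}. This establishes the formula for $c_m$ in the non-integer case, and since \eqref{2-2} is an identity in $l^2$, we get $b_m=c_m$.

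It remains to observe that the coordinatewise series defining $c_m$ actually converges: the middle sum is (a multiple of) the $m$-th entry of $\widetilde{H}{\bf a}\in l^2$, hence finite, and for the tail, $\bigl|\frac{(-1)^k a_{m+k}-a_m}{k}\,\mathrm{sinc}(t-k)\bigr|\le \frac{|a_{m+k}|+|a_m|}{|k|\,|t-k|}$, which is summable over $k\neq 0,t$ by Cauchy--Schwarz since $(a_{m+k})_k\in l^2$ and $(1/(|k||t-k|))_k\in l^2$; this is consistent with the $l^2$-convergence of the vector series already proved in Theorem~\ref{SST}. Finally, in the integer case $t=N$, I would substitute $t=N$ into the expression for $c_m$ and invoke the equality $b_m=c_m$ together with $b_m=(-1)^N a_{m+N}$ to conclude that the displayed identity for $c_m$ reduces to the tautology $b_m=(-1)^N a_{m+N}=c_m$; note here that $\mathrm{sinc}(N-k)=\delta_{k,N}$ forces the whole tail sum to collapse, and $\mathrm{sinc}(N)=\delta_{N,0}$ kills the middle term unless $N=0$, which is precisely why no information is lost and why the right-hand side is genuinely equal to $(-1)^N a_{m+N}$.

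The only mild obstacle is purely notational: keeping the ranges of summation straight (in particular that ``$n\neq m$'' in the $H{\bf a}$ term but ``$k\neq 0$'' in the tail, and that the tail term at $k=N$ is the one that survives in the integer case), and making sure the convergence remarks are phrased so as not to duplicate what Theorem~\ref{SST} already gives. There is no serious analytic difficulty; everything follows by inserting the explicit formulas for $e^{tH}$ and $H=\pi\widetilde H$ into the components of \eqref{2-2}.
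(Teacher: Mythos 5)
Your proposal is correct and follows exactly the route the paper intends: the paper states this Proposition without proof, treating it as a direct coordinatewise translation of (\ref{2-2}) via the explicit formulas for $e^{tH}$ (non-integer and integer $t$) and $H=\pi\widetilde H$, which is precisely what you carry out. Your added convergence check for the series defining $c_m$ and the observation that $\mathrm{sinc}(N-k)=\delta_{k,N}$ collapses the integer case to the tautology are correct and only make explicit what the paper leaves implicit.
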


The next theorem is a generalization of what is known as the Valiron-Tschakaloff
sampling/interpolation formula \cite{BFHSS}.

\begin{thm}\label{V-T}
For every ${\bf a}\in l^{2}$
one  has 
\begin{equation}\label{VT-2}
e^{tH}{\bf a}=sinc\left( t\right){\bf a}+t \>sinc\left( t\right)H{\bf a}+  \sum_{k\in \mathbb{Z}\setminus \{0\}}\frac{ t}{k}sinc\left( t-k\right)e^{kH}{\bf a},
\end{equation}
where the series converges in the norm of $l^{2}$.
\end{thm}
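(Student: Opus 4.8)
The plan is to mimic the proof of Theorem~\ref{SST}, but to start from the Valiron--Tschakaloff sampling formula rather than the classical Shannon formula. Recall that the Valiron--Tschakaloff formula states that for $h\in {\bf B}_{\pi}^{2}(\mathbb{R})$ one has
$$
h(t)=sinc(t)\,h(0)+t\,sinc(t)\,h'(0)+\sum_{k\in\mathbb{Z}\setminus\{0\}}\frac{t}{k}\,sinc(t-k)\,h(k),
$$
with uniform convergence on compact subsets of $\mathbb{R}$. The idea is to apply this to the scalar function $\Phi(t)=\langle e^{tH}{\bf a},{\bf a}^{*}\rangle$, which by Lemma~\ref{LEM} lies in ${\bf B}_{\pi}^{\infty}(\mathbb{R})$; since $\Phi$ is also bounded on $\mathbb{R}$ one checks it actually sits in the setting needed for the Valiron--Tschakaloff expansion (boundedness of $\Phi$ gives the decay needed to justify the Parseval/Valiron--Tschakaloff machinery, exactly as $\Psi\in{\bf B}_{\pi}^{2}$ was used in Theorem~\ref{SST}). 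One has $\Phi(0)=\langle{\bf a},{\bf a}^{*}\rangle$, $\Phi'(0)=\langle H{\bf a},{\bf a}^{*}\rangle$, and $\Phi(k)=\langle e^{kH}{\bf a},{\bf a}^{*}\rangle$, so the scalar identity reads
$$
\langle e^{tH}{\bf a},{\bf a}^{*}\rangle
= sinc(t)\langle{\bf a},{\bf a}^{*}\rangle
+ t\,sinc(t)\langle H{\bf a},{\bf a}^{*}\rangle
+ \sum_{k\in\mathbb{Z}\setminus\{0\}}\frac{t}{k}\,sinc(t-k)\langle e^{kH}{\bf a},{\bf a}^{*}\rangle.
$$

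Next I would show that the $l^{2}$-valued series $\sum_{k\neq 0}\frac{t}{k}sinc(t-k)\,e^{kH}{\bf a}$ converges in norm for each fixed $t$. Since each $e^{kH}$ is an isometry, $\|e^{kH}{\bf a}\|=\|{\bf a}\|$, so the tail is dominated by $|t|\,\|{\bf a}\|\sum_{k\neq 0,\,t}\frac{1}{|k|\,|t-k|}<\infty$, exactly the estimate used in the proof of Theorem~\ref{SST}. Actually it is cleaner to rewrite $\frac{t}{k}e^{kH}{\bf a}=\frac{t}{k}(e^{kH}{\bf a}-{\bf a})+\frac{t}{k}{\bf a}$ and note $\sum_{k\neq 0}\frac{t}{k}sinc(t-k)$ converges while $\sum_{k\neq 0}\frac{t}{k}sinc(t-k)(e^{kH}{\bf a}-{\bf a})$ converges absolutely in $l^2$; either way the partial sums form a Cauchy sequence in $l^{2}$, hence converge to some ${\bf c}(t)\in l^{2}$.

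Then I would combine: the right-hand side of \eqref{VT-2}, call it ${\bf c}(t)$, is a well-defined element of $l^{2}$, and pairing it against an arbitrary ${\bf a}^{*}\in l^{2}$ gives, by continuity of the inner product and the scalar identity above, $\langle {\bf c}(t),{\bf a}^{*}\rangle=\langle e^{tH}{\bf a},{\bf a}^{*}\rangle$. Since ${\bf a}^{*}$ is arbitrary, ${\bf c}(t)=e^{tH}{\bf a}$, which is \eqref{VT-2}. The main obstacle I anticipate is the interchange of the (infinite) sum with the inner product: one needs that $\langle\cdot,{\bf a}^{*}\rangle$ applied to the $l^{2}$-limit of the partial sums equals the limit of the scalar partial sums, which is immediate once $l^{2}$-norm convergence of the vector series is established, plus one must be slightly careful that the scalar Valiron--Tschakaloff expansion genuinely applies to $\Phi\in {\bf B}_{\pi}^{\infty}$ (boundedness on $\mathbb{R}$, not $L^2$ membership, is what is available) — this is the same technical point already handled implicitly in Theorem~\ref{SST} and in \cite{BFHSS}, so I would cite it rather than reprove it. The verification that the two $sinc(t)$ coefficients in front of ${\bf a}$ and $H{\bf a}$ are correct is a routine bookkeeping step via $\Phi(0)$ and $\Phi'(0)$.
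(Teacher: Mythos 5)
Your proposal is correct and follows essentially the same route as the paper: apply the Valiron--Tschakaloff formula (for ${\bf B}_{\pi}^{\infty}(\mathbb{R})$, as in \cite{BFHSS}) to $\Phi(t)=\langle e^{tH}{\bf a},{\bf a}^{*}\rangle$, verify $l^{2}$-convergence of the vector series using $\|e^{kH}{\bf a}\|=\|{\bf a}\|$, and conclude by the arbitrariness of ${\bf a}^{*}$. Your convergence estimate is in fact slightly more explicit than what the paper records.
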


\begin{proof}
If $h\in \mathbf{B}_{\sigma}^{\infty}(\mathbb{R}),\>\>\>\sigma>0,$ then for all $z \in \mathbb{C}$ the following 
Valiron-Tschakaloff sampling/interpolation formula holds \cite{BFHSS} 
\begin{equation}\label{vt}
h(t)= sinc\left(\frac{\sigma  t}{\pi}\right)f(0)+
$$
$$
t \> sinc\left(\frac{\sigma  t}{\pi}\right)f^{'}(0)+
 \sum_{k\in \mathbb{Z}\setminus \{0\}}\frac{\sigma t}{k\pi} sinc\left(\frac{\sigma  t}{\pi}-k\right)h\left(\frac{k\pi}{\sigma}\right),
\end{equation}
the convergence being absolute and uniform on compact subsets of $\mathbb{C}$.
If  ${\bf a}, {\bf a}^{*}\in l^{2}, $  then  $\left\langle e^{tH}{\bf a},\>{\bf a}^{*}\right\rangle\in  \mathbf{B}_{\pi}^{\infty}(\mathbb{R})$ and according to (\ref{vt}) with $\sigma=\pi$ we have
$$
\left\langle e^{tH}{\bf a},\>{\bf a}^{*}\right\rangle=
sinc\left( t\right)\left\langle {\bf a},\>{\bf a}^{*}\right\rangle+
$$
$$
t \> sinc\left( t\right)\left\langle H{\bf a},\>{\bf a}^{*}\right\rangle+
 \sum_{k\in \mathbb{Z}\setminus \{0\}}\frac{t}{k} sinc\left( t-k\right)\left\langle e^{kH}{\bf a},\>{\bf a}^{*}\right\rangle.
$$
Because  the series 
$$
\sum_{k\in \mathbb{Z}\setminus \{0\}}\frac{ t}{k} sinc\left( t-k\right)e^{kH}{\bf a}
$$
converges in $l^{2}$ we obtain the formula (\ref{VT-2}). Theorem is proved.

\end{proof}
The following proposition formulates (\ref{VT-2}) in the specific language of $l^{2}$.

\begin{prop}
If ${\bf a }=(a_{n})\in l^{2}$  and $t$ is not integer then the left-hand side of (\ref{VT-2}) is a sequence $e^{tH}{\bf a}={\bf b}=(b_{m})\in l^{2}$ 
with entries 

\begin{equation}\label{Sec-1}
b_{m}=\frac{\sin \pi t}{\pi}\sum_{n\in \mathbb{Z}}   \frac{a_{n}} {m-n+t},
\end{equation}
and the right-hand side of (\ref{VT-2}) represents a sequence ${\bf c}=(c_{m})\in l^{2}$  with entries
\begin{equation}\label{sec-2}
c_{m}=a_{m}\>sinc(t)+
$$
$$
t\>sinc(t) 
\sum_{n\in \mathbb{Z}, n\neq m}\frac{a_{n}}{m-n}+
t\sum_{k\in \mathbb{Z}\setminus \{0\}} (-1)^{k}\frac{ sinc\left(t -k\right) }{k  } a_{m+k}.
\end{equation}
\end{prop}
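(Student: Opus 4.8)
The plan is to read off the $m$-th coordinate of each side of the identity (\ref{VT-2}), which has already been established as an equality in $l^{2}$ in Theorem \ref{V-T}. For each fixed $m$ the coordinate functional $(x_{j})\mapsto x_{m}$ is bounded on $l^{2}$, so convergence in the norm of $l^{2}$ of the series appearing in (\ref{VT-2}) automatically yields convergence of the corresponding scalar series, and one is free to compute entry by entry and to pass the coordinate functional through the infinite sum.

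First I would handle the left-hand side: for $t\notin\mathbb{Z}$ the explicit formula for the group $e^{tH}$ recalled above gives at once $e^{tH}{\bf a}={\bf b}=(b_{m})$ with $b_{m}=\frac{\sin\pi t}{\pi}\sum_{n\in\mathbb{Z}}\frac{a_{n}}{m-n+t}$. Then I would treat the three summands of the right-hand side in turn. The $m$-th entry of $sinc(t){\bf a}$ is $a_{m}\,sinc(t)$. Since $H=\pi\widetilde{H}$ and $(\widetilde{H}{\bf a})_{m}=\frac{1}{\pi}\sum_{n\neq m}\frac{a_{n}}{m-n}$, the $m$-th entry of $t\,sinc(t)\,H{\bf a}$ is $t\,sinc(t)\sum_{n\neq m}\frac{a_{n}}{m-n}$. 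Finally, every index $k$ occurring in the last series is a nonzero integer, so the integer case of the formula for $e^{kH}$ gives $(e^{kH}{\bf a})_{m}=(-1)^{k}a_{m+k}$, whence the $m$-th entry of $\sum_{k\in\mathbb{Z}\setminus\{0\}}\frac{t}{k}sinc(t-k)e^{kH}{\bf a}$ equals $t\sum_{k\neq 0}(-1)^{k}\frac{sinc(t-k)}{k}a_{m+k}$. Adding the three contributions produces $c_{m}$ exactly as stated.

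The argument is essentially bookkeeping, so there is no real obstacle; the one point deserving a word is the passage from $l^{2}$-convergence to the coordinatewise identity together with the absolute convergence of the scalar series $\sum_{k\neq 0}(-1)^{k}sinc(t-k)a_{m+k}/k$, which holds because $|a_{m+k}|\le\|{\bf a}\|$ and $\sum_{k\neq 0}\frac{1}{|k||t-k|}<\infty$ for $t\notin\mathbb{Z}$ --- precisely the estimate already invoked in the proof of Theorem \ref{V-T}. If desired one could instead verify $b_{m}=c_{m}$ directly by inserting $sinc(t-k)=\frac{(-1)^{k}\sin\pi t}{\pi(t-k)}$ and recognizing a partial-fraction-type identity, but this is unnecessary once Theorem \ref{V-T} is available.
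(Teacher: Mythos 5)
Your verification is correct and is exactly what the paper intends (the Proposition is stated without proof there): one reads off the $m$-th coordinate of each term of (\ref{VT-2}) using the De Carli--Samad explicit formulas for $e^{tH}$ in the non-integer and integer cases together with $H=\pi\widetilde{H}$, the interchange being justified by the boundedness of coordinate functionals on $l^{2}$. Nothing is missing.
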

When $t$ in  (\ref{VT-2}) is an integer $t=N$  then (\ref{VT-2}) is the tautology $b_{m}=(-1)^{N}a_{m+N}=c_{m}$.

\section{An irregular sampling theorem}\label{irreg}

The following fact was proved in \cite{Hig}.

\begin{thm}\label{Hig}
Let $\{t_{k}\}_{k\in \mathbb{Z}}$ be a sequence of real numbers such that 
\begin{equation}\label{Hig1}
\sup_{k\in\mathbb{Z}}|t_{k}-k|<1/4.
\end{equation}
Define the entire function 
\begin{equation}\label{Hig2}
G(t)=(t-t_{0})\prod_{k\in \mathbb{Z}}\left(1-\frac{z}{t_{k}}\right)\left(1-\frac{z}{t_{-k}}\right).
\end{equation}
 Then for all $f\in \mathbf{B}_{\pi}^{2}(\mathbb{R})$ we have
$$
f(t)=\sum_{k\in \mathbb{Z}}f(t_{k})\frac{G(t)}{G^{'}(t_{k})(t-t_{k})}
$$
uniformly on every compact subset of $\mathbb{R}$.
\end{thm}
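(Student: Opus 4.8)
The plan is to recognize Theorem \ref{Hig} as the classical Paley--Wiener--Levinson irregular sampling (Lagrange-type interpolation) theorem, whose proof rests on Kadec's $1/4$ theorem. The hypothesis (\ref{Hig1}), $\sup_k|t_k-k|<1/4$, is exactly the sharp Kadec bound, so the first step is to invoke Kadec's theorem to conclude that the exponential system $\{e^{it_k\xi}\}_{k\in\mathbb{Z}}$ is a Riesz basis for $L^2[-\pi,\pi]$. Transferring this through the Fourier transform (Paley--Wiener) says precisely that $\{t_k\}$ is a \emph{complete interpolating sequence} for $\mathbf{B}_\pi^2(\mathbb{R})$, equivalently that the reproducing kernels $K_{t_k}(z)=\frac{\sin\pi(z-t_k)}{\pi(z-t_k)}$ form a Riesz basis of $\mathbf{B}_\pi^2(\mathbb{R})$.

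Next I would analyze the generating function $G$. Writing $t_k=k+\delta_k$ with $|\delta_k|<1/4$, the proximity of the zeros to the integers shows that the canonical product in (\ref{Hig2}) converges to an entire function of exponential type exactly $\pi$, modelled on the Hadamard factorization $\sin\pi z=\pi z\prod_{k\ge1}(1-z^2/k^2)$; the perturbation keeps the type equal to $\pi$ and produces simple zeros precisely at the points $\{t_k\}$, so that $G'(t_k)\neq0$. I would then introduce the cardinal (Lagrange) functions
\[
\phi_k(t)=\frac{G(t)}{G'(t_k)(t-t_k)},
\]
check that each $\phi_k$ is an $L^2$ function of type $\pi$, hence lies in $\mathbf{B}_\pi^2(\mathbb{R})$ (dividing $G$ by one of its linear factors removes exactly one zero, preserves the type $\pi$, and yields the $1/t$ decay needed for square-integrability), and verify the biorthogonality $\phi_k(t_j)=\delta_{jk}$, which is immediate from the zero structure of $G$ for $j\neq k$ and from a direct limit computation at $j=k$.

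With these pieces in place the expansion follows by duality. By the reproducing property $\langle\phi_k,K_{t_j}\rangle=\phi_k(t_j)=\delta_{jk}$, so $\{\phi_k\}$ is biorthogonal to the Riesz basis $\{K_{t_k}\}$ and is therefore its dual Riesz basis. Expanding an arbitrary $f\in\mathbf{B}_\pi^2(\mathbb{R})$ in this basis and using $\langle f,K_{t_k}\rangle=f(t_k)$ forces the coefficients to equal $f(t_k)$, giving $f=\sum_k f(t_k)\phi_k$ with convergence in $L^2(\mathbb{R})$. Finally the convergence is upgraded: for the partial sums $S_N$, point evaluation on $\mathbf{B}_\pi^2(\mathbb{R})$ is realized by the reproducing kernel with $\|K_t\|^2=K(t,t)=1$, so $|f(t)-S_N(t)|=|\langle f-S_N,K_t\rangle|\le\|f-S_N\|$, and the right-hand side is independent of $t$; thus $L^2$ convergence already yields convergence uniform on all of $\mathbb{R}$, in particular on every compact subset, which is the claimed statement.

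The main obstacle is the first step together with the structural analysis of $G$: establishing the complete-interpolating-sequence (Riesz-basis) property is exactly Kadec's theorem, and since the constant $1/4$ is sharp, no softer perturbation argument will cover the full range of the hypothesis. Once that Riesz-basis framework is in hand, the remaining verifications — type $\pi$ and convergence of the product, the membership $\phi_k\in\mathbf{B}_\pi^2(\mathbb{R})$, and the biorthogonality — are routine.
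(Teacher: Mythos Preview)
The paper does not prove this theorem at all: it is stated as a known fact with the attribution ``The following fact was proved in \cite{Hig}'' and is then simply applied to the function $\Psi$. There is therefore no ``paper's own proof'' to compare your proposal against.

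That said, your outline is a correct and standard route to this classical result. Invoking Kadec's $1/4$ theorem to obtain the Riesz-basis property of $\{e^{it_k\xi}\}$ in $L^2[-\pi,\pi]$, transferring it via Paley--Wiener to the reproducing-kernel basis of $\mathbf{B}_\pi^2(\mathbb{R})$, identifying the Lagrange functions $\phi_k=G/(G'(t_k)(\,\cdot\,-t_k))$ as the dual basis, and then upgrading $L^2$ convergence to uniform convergence through the bounded reproducing kernel is exactly the modern functional-analytic proof. The only places that need a little more care than you indicate are the convergence and type-$\pi$ analysis of the canonical product $G$ (one typically compares $G$ to $\sin\pi z$ termwise and controls $\log|G(z)/\sin\pi z|$ using $\sup_k|t_k-k|<1/4$) and the verification that each $\phi_k$ actually lies in $L^2(\mathbb{R})$, which requires a growth estimate on $G$ along the real axis, not merely the $1/t$ factor. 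These are genuinely routine once the Kadec input is in hand, so your assessment of where the real work lies is accurate.
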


As it was already noticed, for any ${\bf a},{\bf a}^{*}\in l^{2}$ the function $\Psi(t)$ defined for all $t\neq 0$ as 

$$
\left\langle \frac{e^{tH}{\bf a}-{\bf a}}{t}, {\bf a}^{*}\right\rangle,
$$
and for $t=0$ as $\Psi(0)=\langle {H\bf a}, {\bf a}^{*}\rangle$ belongs to $\mathbf{B}_{\pi}^{2}(\mathbb{R})$.
 Applying Theorem \ref{Hig} we obtain the following.

 \begin{thm} If ${\bf a},{\bf a}^{*}\in l^{2}$ and a sequence $\{t_{k}\}$ satisfies (\ref{Hig1}) then 
 $$
\Psi(t)=\sum_{k\in \mathbb{Z}}\Psi(t_{k})\frac{G(t)}{G^{'}(t_{k})(t-t_{k})},
$$
uniformly on every compact subset of $\mathbb{R}$.
\end{thm}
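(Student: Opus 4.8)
The plan is to reduce the statement to a direct application of Theorem \ref{Hig} (Higgins' irregular sampling theorem) by verifying that the hypotheses are met for the function $\Psi$. The preceding paragraph already records the key structural fact: for any fixed ${\bf a},{\bf a}^{*}\in l^{2}$, the scalar function $\Psi(t)=\left\langle \tfrac{e^{tH}{\bf a}-{\bf a}}{t},{\bf a}^{*}\right\rangle$ (with the removable singularity at $t=0$ filled in by $\Psi(0)=\langle H{\bf a},{\bf a}^{*}\rangle$) lies in the Bernstein class ${\bf B}_{\pi}^{2}(\mathbb{R})$ — this was proved in the Lemma following \eqref{F2}. So the only remaining ingredient is that the sampling nodes satisfy the Kadec-type condition \eqref{Hig1}, which is exactly the hypothesis we impose.

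Concretely, I would proceed as follows. First, fix ${\bf a},{\bf a}^{*}\in l^{2}$ and recall that $\Psi\in{\bf B}_{\pi}^{2}(\mathbb{R})$ by the Lemma. Second, let $\{t_{k}\}_{k\in\mathbb{Z}}$ be any real sequence with $\sup_{k}|t_{k}-k|<1/4$; form the generating function $G$ as in \eqref{Hig2}. Third, invoke Theorem \ref{Hig} with $f=\Psi$ and $\sigma=\pi$: this yields immediately
$$
\Psi(t)=\sum_{k\in\mathbb{Z}}\Psi(t_{k})\frac{G(t)}{G^{'}(t_{k})(t-t_{k})},
$$
with convergence uniform on compact subsets of $\mathbb{R}$. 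That is the asserted formula, so the proof is essentially a one-line citation once the class membership is in place.

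There is genuinely no serious obstacle here: the work was front-loaded into establishing $\Psi\in{\bf B}_{\pi}^{2}(\mathbb{R})$, and that is already done. The one point worth a sentence of care is that $\Psi$ must be interpreted as the entire function of exponential type $\pi$ — i.e. one uses the value $\Psi(0)=\langle H{\bf a},{\bf a}^{*}\rangle$ when a node $t_{k}$ happens to equal $0$ (which can occur, since only $|t_{k}-k|<1/4$ is required), and more generally $\Psi(t_k)$ denotes the value of the entire extension, not the literal difference quotient. Beyond that bookkeeping remark, the statement follows directly from Theorem \ref{Hig}, exactly as the regularly-spaced results earlier in the paper followed from Theorems \ref{Shannon-1} and \ref{Shannon-2}. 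One could optionally add a remark translating the formula into the concrete $l^2$ language of the sequences involved, as was done after Theorems \ref{SST} and \ref{V-T}, but this is not needed for the proof itself.
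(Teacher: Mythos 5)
Your proposal is correct and matches the paper's argument exactly: the paper likewise notes that $\Psi\in{\bf B}_{\pi}^{2}(\mathbb{R})$ (established in the earlier Lemma) and then applies Theorem \ref{Hig} directly to obtain the formula. Your added remark about interpreting $\Psi(t_k)$ via the entire extension when a node hits $0$ is a sensible bookkeeping point the paper leaves implicit.
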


\section{Riesz-Boas interpolation formulas for the discrete Hilbert transform}\label{RB}

Consider a trigonometric
polynomial $P(t)$ of one variable $t$ as a function on a
unit circle $\mathbb{T}$. The famous Riesz interpolation formula \cite{R1}, \cite{R}, \cite{Nik}  can be written in the
form
\begin{equation}\label{Riesz}
\left(\frac{d}{dt}\right)P(t)=\frac{1}{4\pi}\sum_{k=1}^{2n}(-1)^{k+1}
\frac{1}{\sin^{2}\frac{t_{k}}{2}}U_{t_{k}}P(t), \>\>\>t\in
\mathbb{T}, \>\>\>t_{k}=\frac{2k-1}{2n}\pi,
\end{equation}
where $U_{t_{k}}P(t)=P(t_{k}+t)$.
This formula was extended by Boas \cite{B}, \cite{B1}, (see also \cite{Akh}, \cite{Nik}, \cite{Schm}) to functions in $\mathbf{B}_{\sigma}^{\infty}(\mathbb{R})$ in the following form 
\begin{equation}\label{Boas}
\left( \frac{d}{dt}\right)f(t)=\frac{n}{\pi^{2}}\sum_{k\in\mathbb{Z}}\frac{(-1)^{k-1}}{(k-1/2)^{2}}
U_{\frac{\pi}{n}(k-1/2)}f(t), \>\>\>t\in \mathbb{R},
\end{equation}
where $U_{\frac{\pi}{n}(k-1/2)}f(t)=f(\frac{\pi}{n}(k-1/2)+t)$.
In turn, the formula (\ref{Boas}) was  extended in \cite{BSS} to  higher powers   $(d/dt)^{m}$. In this section  we present some natural analogs of such formulas (which we call Riesz-Boas interpolation formulas) associated with the discrete Hilbert transform. Our objective   is to obtain  similar formulas where the operator $d/dt$ is replaced by the discrete Hilbert transform $H$ and the group of regular translations $U_{t}$  is replaced by the group $e^{t H}$.

Let's introduce bounded operators
\begin{equation}\label{b1}
\mathcal{R}^{(2s-1)}_{H}{\bf a}=
\sum_{k\in \mathbb{Z}}(-1)^{k+1}A_{s,k}
e^{(k-1/2)H}{\bf a}
,\>\> \>\>{\bf a}\in l^{2},\>\>\>s\in \mathbb{N},
\end{equation}
and
\begin{equation}\label{b2}
\mathcal{R}^{(2s)}_{H}{\bf a}=
\sum_{k\in \mathbb{Z}}(-1)^{k+1}B_{s,k}
e^{kH}
{\bf a}, \>\> \>\>{\bf a}\in l^{2}, \>\>s\in \mathbb{N},
\end{equation}
where $A_{s,k}$ and $B_{s,k}$ are defined 
as 

\begin{equation}\label{A}
A_{s,k}=(-1)^{k+1}  sinc ^{(2s-1)}\left(\frac{1}{2}-k\right)=
$$
$$
\frac{(2s-1)!}{\pi(k-\frac{1}{2})^{2s}}\sum_{j=0}^{s-1}\frac{(-1)^{j}}{(2j)!}\left(\pi\left(k-\frac{1}{2}\right)\right)^{2j},\>\>\>s\in \mathbb{N},
\end{equation}
for $k\in \mathbb{Z}$,

\begin{equation}\label{B}
B_{s,k}=(-1)^{k+1}  sinc ^{(2s)}(-k)=\frac{(2s)!}{\pi k^{2s+1}}\sum_{j=0}^{s-1}\frac{(-1)^{j}(\pi k)^{2j+1}}{(2j+1)!},\>\>\>s\in \mathbb{N},
\end{equation}
for $k\in \mathbb{Z}\setminus \{0\}$, 
and 
\begin{equation}\label{B0}
B_{s,0}=(-1)^{s+1} \frac{\pi^{2s}}{2s+1},\>\>\>s\in \mathbb{N}.
\end{equation}
Both series converge in $l^{2}$ due to the following formulas (see \cite{BSS})
\begin{equation}
\sum_{k\in \mathbb{Z}}\left|A_{s,k}\right|=\pi^{2s-1},\>\>\>\>\>
\sum_{k\in \mathbb{Z}}\left|B_{s,k}\right|=\pi^{2s}\label{id-2}, s\in \mathbb{N}.
 \end{equation}
 Since $\|e^{tH}f\|=\|f\|$ it implies that 
  \begin{equation}\label{norms}
 \|\mathcal{R}^{(2s-1)}_{H}{\bf a}\|\leq \pi^{2s-1}\|{\bf a}\|,\>\>\>\>\>\|\mathcal{R}^{(2s)}_{H}{\bf a}\|\leq \pi^{2s}\|{\bf a}\|,\>\>\>{\bf a}\in l^{2},\>\>\>s\in \mathbb{N}. 
 \end{equation}
 \begin{thm}
 For ${\bf a}\in l^{2}$  the following Riesz-Boas-type interpolation formulas hold true for $r\in \mathbb{N}$
\begin{equation}\label{B1}
H^{r}{\bf a}=\mathcal{R}^{(r)}_{H}{\bf a},\>\>\>{\bf a}\in l^{2}.
\end{equation}
More explicitly, if $r=2s-1, \>s\in \mathbb{N},$ then 
\begin{equation}\label{RB1}
H^{2s-1}{\bf a}=\sum_{k\in \mathbb{Z}}(-1)^{k+1}A_{s,k}
e^{(k-1/2)H}{\bf a},
\end{equation}
and when $r=2s,\>s\in \mathbb{N},$ then 
\begin{equation}\label{RB2}
H^{2s}{\bf a}=
\sum_{k\in \mathbb{Z}}(-1)^{k+1}B_{s,k}
e^{kH}
{\bf a}.
\end{equation}
\end{thm}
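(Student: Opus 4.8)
The plan is to reduce the operator identity to the scalar Riesz–Boas interpolation formula for functions in $\mathbf{B}_{\pi}^{\infty}(\mathbb{R})$, exactly as the sampling theorems in the preceding sections were reduced to scalar Shannon-type formulas. The key observation is that the scalar higher-order Riesz–Boas formula from \cite{BSS}, applied to $f\in\mathbf{B}_{\sigma}^{\infty}(\mathbb{R})$ at the point $t=0$, reads (with $\sigma=\pi$)
\begin{equation}\label{scalar-rb}
f^{(r)}(0)=\sum_{k\in\mathbb{Z}}(-1)^{k+1}C_{r,k}\,f\!\left(\tau_{r,k}\right),
\end{equation}
where for odd $r=2s-1$ the nodes are $\tau_{r,k}=k-1/2$ and $C_{r,k}=A_{s,k}$, while for even $r=2s$ the nodes are $\tau_{r,k}=k$ and $C_{r,k}=B_{s,k}$; the coefficients $A_{s,k},B_{s,k}$ are precisely the values of $\operatorname{sinc}^{(r)}$ at the corresponding nodes, as recorded in (\ref{A}), (\ref{B}), (\ref{B0}), and the absolute convergence of the series is guaranteed by (\ref{id-2}).

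Next I would fix ${\bf a},{\bf a}^{*}\in l^{2}$ and apply (\ref{scalar-rb}) to the scalar function $\Phi(t)=\langle e^{tH}{\bf a},{\bf a}^{*}\rangle$, which belongs to $\mathbf{B}_{\pi}^{\infty}(\mathbb{R})$ by Lemma \ref{LEM}. The left-hand side is $\Phi^{(r)}(0)=\langle H^{r}{\bf a},{\bf a}^{*}\rangle$, the identity used already in the proof of Lemma \ref{LEM}. The right-hand side is
\begin{equation}
\sum_{k\in\mathbb{Z}}(-1)^{k+1}C_{r,k}\,\langle e^{\tau_{r,k}H}{\bf a},{\bf a}^{*}\rangle=\left\langle\sum_{k\in\mathbb{Z}}(-1)^{k+1}C_{r,k}\,e^{\tau_{r,k}H}{\bf a},\,{\bf a}^{*}\right\rangle=\langle\mathcal{R}_{H}^{(r)}{\bf a},{\bf a}^{*}\rangle,
\end{equation}
where pulling ${\bf a}^{*}$ out of the sum is legitimate because the $l^{2}$-valued series defining $\mathcal{R}_{H}^{(r)}{\bf a}$ converges in norm — this is exactly the content of (\ref{id-2}) together with $\|e^{tH}{\bf a}\|=\|{\bf a}\|$, as already noted in (\ref{norms}). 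Thus $\langle H^{r}{\bf a}-\mathcal{R}_{H}^{(r)}{\bf a},{\bf a}^{*}\rangle=0$ for every ${\bf a}^{*}\in l^{2}$, which forces $H^{r}{\bf a}=\mathcal{R}_{H}^{(r)}{\bf a}$ and gives (\ref{B1}); specializing to $r=2s-1$ and $r=2s$ yields (\ref{RB1}) and (\ref{RB2}).

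The main obstacle is purely bookkeeping rather than conceptual: one must verify that the coefficients $A_{s,k}$ and $B_{s,k}$ written in closed form in (\ref{A})–(\ref{B0}) really do coincide with $(-1)^{k+1}\operatorname{sinc}^{(r)}$ evaluated at the sampling nodes, so that the scalar formula (\ref{scalar-rb}) from \cite{BSS} applies verbatim with these constants; the exceptional value $B_{s,0}$ requires separately computing $\operatorname{sinc}^{(2s)}(0)$, which is $(-1)^{s}\pi^{2s}/(2s+1)$ from the Taylor expansion of $\operatorname{sinc}$. One should also remark that, unlike the sampling theorems, here there is no "tautology" degeneracy to discuss, and that the interchange of the infinite sum with the inner product needs the norm convergence of the $l^{2}$-valued series — both points are already prepared by the estimates (\ref{id-2}) and (\ref{norms}) stated just before the theorem.
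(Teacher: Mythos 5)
Your proposal is correct and follows essentially the same route as the paper: both apply the scalar Boas-type formula from \cite{BSS} to $\Phi(t)=\langle e^{tH}{\bf a},{\bf a}^{*}\rangle\in \mathbf{B}_{\pi}^{\infty}(\mathbb{R})$, identify $\Phi^{(r)}$ with $\langle H^{r}e^{tH}{\bf a},{\bf a}^{*}\rangle$, use the norm convergence guaranteed by (\ref{id-2}) and (\ref{norms}) to move the sum inside the inner product, and conclude by the arbitrariness of ${\bf a}^{*}$. The only cosmetic difference is that the paper carries the argument at a general $t$ (obtaining (\ref{sam1})--(\ref{sam2})) and then sets $t=0$, whereas you evaluate at $t=0$ directly.
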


\begin{proof}
As we know, for any ${\bf a},\>{\bf a}^{*}\in l^{2}$ the function 
 $
\Phi(t)=\left\langle e^{tH}{\bf a}, {\bf a}^{*}\right\rangle
$
belongs to $ {\bf B}_{\pi}^{\infty}(\mathbb{R}).$ Thus by  \cite{BSS} we have
$$
\Phi^{(2m-1)}(t)=\sum_{k\in \mathbb{Z}}(-1)^{k+1}A_{m,k}\Phi\left(t+(k-1/2)    \right),\>\>\>m\in \mathbb{N},
$$
$$
\Phi^{(2m)}(t)=\sum_{k\in \mathbb{Z}}(-1)^{k+1}B_{m,k}\Phi\left(t+k \right),\>\>\>m\in \mathbb{N}.
$$
Together with 
$$
\left(\frac{d}{dt}\right)^{k}\Phi(t)=\left<H^{k}e^{tH}{\bf a}, {\bf a}^{*}\right>,
$$
  it shows
$$
\left<e^{tH}H^{2m-1}{\bf a}, {\bf a}^{*}\right>=\sum_{k\in \mathbb{Z}}(-1)^{k+1}A_{m,k}\left<e^{\left(t+(k-1/2)    \right)H}{\bf a},\>\>{\bf a}^{*}\right>,\>\>\>m\in \mathbb{N},
$$
and also
$$
\left<e^{tH}H^{2m}{\bf a}, {\bf a}^{*}\right>=\sum_{k\in \mathbb{Z}}(-1)^{k+1}B_{m,k}\left<e^{\left(t+k   \right)H}{\bf a},\>\> {\bf a}^{*}\right>,\>\>\>m\in \mathbb{N}.
$$
Since both series (\ref{b1}) and (\ref{b2}) converge in $l^{2}$ and the last two equalities hold for any ${\bf a}^{*}\in l^{2}$ we obtain the next two formulas
\begin{equation}\label{sam1}
e^{tH}H^{2m-1}{\bf a}=\sum_{k\in \mathbb{Z}}(-1)^{k+1}A_{m,k}e^{\left(t+(k-1/2)   \right)H}{\bf a},\>\>\>m\in \mathbb{N},
\end{equation}
\begin{equation}\label{sam2}
e^{tH}H^{2m}{\bf a}=\sum_{k\in \mathbb{Z}}(-1)^{k+1}B_{m,k}e^{\left(t+k   \right)H}{\bf a}, \>\>\>m\in \mathbb{N}.
\end{equation}
In turn, when $t=0$ these formulas become formulas (\ref{B1}).   
Theorem is proved.
\end{proof}
 Let us introduce the  notation
 $$
  \mathcal{R}_{H}= \mathcal{R}_{H}^{(1)}.
 $$
 One has the following "power" formula which easily follows from the fact that operators $
  \mathcal{R}_{H}$ and $H$ commute.
 \begin{col}
 For any $r\in \mathbb{N}$ and  any ${\bf a}\in l^{2}$
 \begin{equation}\label{powerB}
 H^{r}{\bf a}=\mathcal{R}_{H}^{(r)}{\bf a}=\mathcal{R}_{H}^{r}{\bf a},
 \end{equation}
 where $\mathcal{R}_{H}^{r}{\bf a}=\mathcal{R}_{H}\left(... \left(\mathcal{R}_{H}{\bf a}\right)\right).$
\end{col}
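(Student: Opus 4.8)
The plan is to deduce the Corollary from the Theorem by showing that $H$ and $\mathcal{R}_H$ commute as bounded operators on $l^2$, and then iterating the single identity $H\mathbf{a}=\mathcal{R}_H\mathbf{a}$ furnished by the case $r=1$ of \eqref{B1}. First I would observe that for every fixed $t\in\mathbb{R}$ the operator $e^{tH}$ is a power series in $H$ (it is defined as such), so $e^{tH}$ commutes with $H$; consequently, since $\mathcal{R}_H\mathbf{a}=\sum_{k\in\mathbb{Z}}(-1)^{k+1}A_{1,k}e^{(k-1/2)H}\mathbf{a}$ is a norm-convergent series (convergence in $l^2$ being guaranteed by \eqref{id-2} together with $\|e^{tH}\mathbf{a}\|=\|\mathbf{a}\|$) whose terms all commute with the bounded operator $H$, the sum $\mathcal{R}_H$ commutes with $H$ as well. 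Here one uses that a bounded operator can be moved inside a norm-convergent series: $H\mathcal{R}_H\mathbf{a}=H\lim_N\sum_{|k|\le N}(-1)^{k+1}A_{1,k}e^{(k-1/2)H}\mathbf{a}=\lim_N\sum_{|k|\le N}(-1)^{k+1}A_{1,k}He^{(k-1/2)H}\mathbf{a}=\lim_N\sum_{|k|\le N}(-1)^{k+1}A_{1,k}e^{(k-1/2)H}H\mathbf{a}=\mathcal{R}_H H\mathbf{a}$.

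Next I would run an induction on $r$. The base case $r=1$ is exactly \eqref{B1}: $H\mathbf{a}=\mathcal{R}_H^{(1)}\mathbf{a}=\mathcal{R}_H\mathbf{a}$. For the inductive step, assume $H^{r-1}\mathbf{a}=\mathcal{R}_H^{r-1}\mathbf{a}$ for all $\mathbf{a}\in l^2$. Then, applying \eqref{B1} with the vector $H^{r-1}\mathbf{a}$ in place of $\mathbf{a}$, and then using the commutation of $\mathcal{R}_H$ with $H$ (hence with $H^{r-1}$) proved above,
\[
H^{r}\mathbf{a}=H\bigl(H^{r-1}\mathbf{a}\bigr)=\mathcal{R}_H\bigl(H^{r-1}\mathbf{a}\bigr)=\mathcal{R}_H\bigl(\mathcal{R}_H^{r-1}\mathbf{a}\bigr)=\mathcal{R}_H^{r}\mathbf{a}.
\]
Finally, the identification $H^{r}\mathbf{a}=\mathcal{R}_H^{(r)}\mathbf{a}$ is just a restatement of \eqref{B1}, so all three quantities in \eqref{powerB} agree.

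The only genuinely delicate point is the interchange of the bounded operator $H$ (and later powers of it) with the infinite series defining $\mathcal{R}_H$; this is the step I expect to spell out carefully. It is not hard — it follows from continuity of $H$ on $l^2$ and the absolute convergence $\sum_k|A_{1,k}|=\pi$ from \eqref{id-2} combined with the isometry property $\|e^{(k-1/2)H}\mathbf{a}\|=\|\mathbf{a}\|$, which together give absolute (hence unconditional) convergence of the series in $l^2$ — but it is the one place where something must actually be verified rather than quoted. Everything else is the formal algebra of iterating a single operator identity, and the induction closes immediately once commutativity is in hand.
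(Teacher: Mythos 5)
Your argument is correct and matches the paper's: the paper gives no detailed proof, stating only that the identity ``easily follows from the fact that operators $\mathcal{R}_{H}$ and $H$ commute,'' and your write-up simply fleshes out that remark (commutativity via moving the bounded operator $H$ through the absolutely convergent series, then induction on $r$ from the case $r=1$ of (\ref{B1})). Nothing in your proposal diverges from, or is missing relative to, what the paper intends.
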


Let's express (\ref{B1}) in terms of $H$ and $e^{tH}$. Our  starting sequence is  ${\bf a}=(a_{n})$ and  then 
we use the notation $H^{k}{\bf a}=\left(a^{(k)}_{m}\right),\>k\in \mathbb{Z}$. One has

$$
H{\bf a}=\frac{1}{\pi}\sum_{(n,\>n\neq n_{1})}\frac{a_{n}}{n_{1}-n}=\left( a^{(1)}_{n_{1}} \right),
$$
$$
H^{2}{\bf a}=
\frac{1}{\pi^{2}}\sum_{(n_{1},\>n_{1}\neq n_{2})}\sum_{(n,\>n\neq n_{1})}\frac{a^{(1)}_{n_{1}}}{(n_{2}-n_{1})(n_{1}-n)}=\left(a^{(2)}_{n_{2}}\right)
$$
and so on up to a $r\in \mathbb{N}$
\begin{equation}\label{gen-term}
H^{r}{\bf a}=
\frac{1}{\pi^{r}}\sum_{(n_{r-1},\>n_{r-1}\neq n_{r})} \sum_{(n_{r-2},\>n_{r-2}\neq n_{r-1})}...
$$
$$
 ...\sum_{(n_{1},\>n_{1}\neq n_{2})} \sum_{(n,\>n\neq n_{1})}\frac{a^{(r-1)}_{n_{r-1}}}{(n_{r}-n_{r-1}) (n_{r}-n_{r-1})... (n_{2}-n_{1})(n_{1}-n)}=\left(a^{(r)}_{n_{r}}\right).
\end{equation}
\begin{thm}
For any ${\bf a}=(a_{n})\in l^{2},\>\>$ and $r=2s-1$ we have in (\ref{RB1}) equality of two sequences where on the left-hand side  we have a sequence whose general term is  
$a^{(2s-1)}_{m}$,
and on the right-hand side we have a sequence whose general term is $c_{m,s}$ where 
$$
c_{m,s}=\sum_{k\in \mathbb{Z}}(-1)^{k+1}
\frac{\sin(\pi (k-1/2))}{\pi} A_{s,k}\sum_{n\neq m}\frac{a_{n}}{m-n+(k-1/2)}.
$$
The equality (\ref{RB1}) tells  that $\>\>a^{(2s-1)}_{m}=c_{m,s}$.

For the case $r=2s$ a sequence on the left hand-side of (\ref{RB2}) has a general term $a^{(2s)}_{m}$, and a sequence on the right has a general term $d_{m,s}$ of the form
$$
d_{m,s}=-\sum_{k\in \mathbb{Z}}B_{s,k}a_{m+k},
$$
and (\ref{RB2}) means that $\>\>a^{(2s)}_{m}=d_{m,s}$.
\end{thm}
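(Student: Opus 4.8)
The plan is to obtain the stated coordinate-wise identities by reading the operator formulas (\ref{RB1}) and (\ref{RB2}), which are already proved, one coordinate at a time; the statement is nothing more than their translation into the ``native'' language of sequences. First I would observe that, by the notation fixed in (\ref{gen-term}), the sequence $H^{r}{\bf a}$ has general term $a^{(r)}_{m}$, so the left-hand side of (\ref{RB1}) with $r=2s-1$ is the sequence $\left(a^{(2s-1)}_{m}\right)$ and the left-hand side of (\ref{RB2}) with $r=2s$ is $\left(a^{(2s)}_{m}\right)$. For the right-hand sides I would use that the series defining $\mathcal{R}^{(2s-1)}_{H}{\bf a}$ and $\mathcal{R}^{(2s)}_{H}{\bf a}$ converge in $l^{2}$ --- this is exactly the content of (\ref{id-2}) combined with $\|e^{tH}{\bf a}\|=\|{\bf a}\|$ --- together with the fact that for each fixed $m$ the coordinate evaluation ${\bf x}\mapsto x_{m}$ is a bounded linear functional on $l^{2}$. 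Consequently the $m$-th coordinate of each of these two series is the absolutely convergent scalar series formed by the $m$-th coordinates of the individual summands (the bound $|x_{m}|\le\|{\bf x}\|$ and $\sum_{k}|A_{s,k}|<\infty$, resp.\ $\sum_{k}|B_{s,k}|<\infty$, give absolute convergence).

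Next I would substitute the explicit description of $e^{tH}$ recalled at the beginning of Section 2. In the odd case the shift $t=k-\tfrac12$ is never an integer, so the $m$-th coordinate of $e^{(k-1/2)H}{\bf a}$ equals $\frac{\sin(\pi(k-1/2))}{\pi}\sum_{n\in\mathbb{Z}}\frac{a_{n}}{m-n+(k-1/2)}$; plugging this into $\mathcal{R}^{(2s-1)}_{H}{\bf a}$ and invoking the already proved identity $H^{2s-1}{\bf a}=\mathcal{R}^{(2s-1)}_{H}{\bf a}$ yields $a^{(2s-1)}_{m}$ as the double sum of the theorem, but with the inner summation extended over all $n\in\mathbb{Z}$ instead of over $n\neq m$. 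In the even case $t=k$ is an integer, so the integer branch gives the $m$-th coordinate of $e^{kH}{\bf a}$ as $(-1)^{k}a_{m+k}$; combined with $H^{2s}{\bf a}=\mathcal{R}^{(2s)}_{H}{\bf a}$ this produces $a^{(2s)}_{m}=\sum_{k\in\mathbb{Z}}(-1)^{k+1}B_{s,k}(-1)^{k}a_{m+k}=-\sum_{k\in\mathbb{Z}}B_{s,k}a_{m+k}=d_{m,s}$ via the sign cancellation $(-1)^{k+1}(-1)^{k}=-1$, so the even case is immediate.

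The one point that I expect to require a genuine argument --- the main obstacle, such as it is --- is reconciling $\sum_{n\in\mathbb{Z}}$ with $\sum_{n\neq m}$ in the odd case, i.e.\ showing the omitted $n=m$ terms contribute nothing. After using $(-1)^{k+1}\sin(\pi(k-1/2))=1$, their total equals $\frac{a_{m}}{\pi}\sum_{k\in\mathbb{Z}}\frac{A_{s,k}}{k-1/2}$. I would then note from (\ref{A}) that $A_{s,k}$ depends only on $(k-\tfrac12)^{2}$, hence $A_{s,k}=A_{s,1-k}$, so $\frac{A_{s,k}}{k-1/2}$ is odd under the involution $k\mapsto 1-k$ of $\mathbb{Z}$; since the series is absolutely convergent ($|k-\tfrac12|\ge\tfrac12$ and $\sum_{k}|A_{s,k}|=\pi^{2s-1}<\infty$ by (\ref{id-2})) it equals its own negative and therefore vanishes. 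This gives $a^{(2s-1)}_{m}=c_{m,s}$, and together with the even case the proof is complete. Everything apart from this cancellation is a routine extraction of coordinates from $l^{2}$-convergent identities that have already been established.
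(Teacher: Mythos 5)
Your proposal is correct. The paper states this theorem without any proof at all -- it is offered as a direct ``native-language'' reformulation of (\ref{RB1}) and (\ref{RB2}) -- so there is no argument of the author's to compare against; yours is the natural one (coordinate functionals are bounded on $l^{2}$, the defining series converge in $l^{2}$ by (\ref{id-2}) and the isometry property, so coordinates pass through the sums, and then one substitutes the De Carli--Samad formulas for $e^{tH}$ at non-integer times $k-\tfrac12$ and integer times $k$). The one place where you add genuine content is exactly the right place: the De Carli--Samad formula for non-integer $t$ sums over \emph{all} $n\in\mathbb{Z}$, whereas the paper's $c_{m,s}$ sums over $n\neq m$, and this discrepancy needs justification. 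Your resolution -- after the simplification $(-1)^{k+1}\sin(\pi(k-\tfrac12))=1$, the omitted terms total $\frac{a_{m}}{\pi}\sum_{k}\frac{A_{s,k}}{k-1/2}$, which vanishes because $A_{s,k}$ is a function of $(k-\tfrac12)^{2}$ alone by (\ref{A}), hence $A_{s,k}=A_{s,1-k}$, the summand is antisymmetric under the fixed-point-free involution $k\mapsto 1-k$, and the series is absolutely convergent by (\ref{id-2}) -- is correct and is the detail the paper silently skips. The even case is, as you say, immediate from $[e^{kH}{\bf a}]_{m}=(-1)^{k}a_{m+k}$ and the sign cancellation $(-1)^{k+1}(-1)^{k}=-1$.
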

Let us introduce the following notations
$$\mathcal{R}^{(2s-1)}_{H}(N){\bf a}=
\sum_{|k|\leq N}(-1)^{k+1}A_{s,k}e^{(k-1/2)H}{\bf a},
$$
$$
\mathcal{R}^{(2s)}_{H}(N){\bf a}=
\sum_{|k|\leq N}(-1)^{k+1}B_{s,k}e^{kH}{\bf a}.
$$
One obviously has the following set of approximate Riesz-Boas-type formulas. 
\begin{thm}
If ${\bf a}\in l^{2}$ and $r\in \mathbb{N}$ then
\begin{equation}\label{appB}
H^{r}{\bf a}=\mathcal{R}_{H}^{(r)}( N){\bf a}+O(N^{-2}).
\end{equation}
\end{thm}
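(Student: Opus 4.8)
The plan is to derive (\ref{appB}) directly from the exact Riesz--Boas formula (\ref{B1}). By the preceding theorem $H^{r}{\bf a}=\mathcal{R}^{(r)}_{H}{\bf a}$, and the series defining $\mathcal{R}^{(r)}_{H}{\bf a}$ in (\ref{b1})--(\ref{b2}) converges in $l^{2}$; hence $H^{r}{\bf a}-\mathcal{R}^{(r)}_{H}(N){\bf a}$ is exactly the tail of that series. For $r=2s-1$,
$$
H^{r}{\bf a}-\mathcal{R}^{(r)}_{H}(N){\bf a}=\sum_{|k|>N}(-1)^{k+1}A_{s,k}\,e^{(k-1/2)H}{\bf a},
$$
and for $r=2s$ the same expression with $B_{s,k}$ and $e^{kH}{\bf a}$. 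So the whole matter reduces to bounding the $l^{2}$-norm of this tail.

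Since each $e^{tH}$ is an isometry of $l^{2}$, one immediately gets
$$
\left\|H^{r}{\bf a}-\mathcal{R}^{(r)}_{H}(N){\bf a}\right\|\le\|{\bf a}\|\sum_{|k|>N}\left|A_{s,k}\right|\qquad\bigl(\text{resp. }\|{\bf a}\|\sum_{|k|>N}|B_{s,k}|\bigr),
$$
which reduces everything to estimating how fast the partial sums in (\ref{id-2}) converge to their limits $\pi^{2s-1}$, resp. $\pi^{2s}$. From the closed forms (\ref{A})--(\ref{B0}) the term of lowest order in $1/(k-1/2)$, resp. $1/k$, dominates as $|k|\to\infty$, so $A_{s,k}$ and $B_{s,k}$ are comparable to $k^{-2}$, and summation over $|k|>N$ yields the claimed order. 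To get the \emph{sharp} power, one should not discard the sign: $(-1)^{k+1}A_{s,k}=sinc^{(2s-1)}(1/2-k)$ is alternating in $k$ with eventually monotone moduli $\sim k^{-2}$, so the scalar tail $\sum_{|k|>N}(-1)^{k+1}A_{s,k}$ is $O(N^{-2})$ by the Leibniz estimate; one then transfers this cancellation to the $l^{2}$-valued tail by a summation by parts against the factors $e^{(k-1/2)H}{\bf a}$, or equivalently by establishing a uniform truncated-Boas bound $\bigl|\Phi^{(r)}(0)-\sum_{|k|\le N}(-1)^{k+1}A_{s,k}\Phi(k-1/2)\bigr|\le C_{r}\|{\bf a}\|\|{\bf a}^{*}\|N^{-2}$ for $\Phi(t)=\langle e^{tH}{\bf a},{\bf a}^{*}\rangle\in{\bf B}_{\pi}^{\infty}(\mathbb{R})$ and taking the supremum over $\|{\bf a}^{*}\|\le 1$.

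I expect this transfer to be the only genuine difficulty. In the scalar case the extra power comes from evaluating the bounded function $\Phi$ at the alternating nodes $k-1/2$, but in the $l^{2}$-valued setting the companion vectors $e^{(k-1/2)H}{\bf a}$ do not converge as $|k|\to\infty$: by the quoted De Carli--Samad formula $e^{H}$ is minus the shift, $(e^{H}{\bf a})_{m}=-a_{m+1}$, so its spectrum is the full unit circle and $\{e^{kH}{\bf a}\}$ has no limit for generic ${\bf a}$. A bare Abel summation therefore only recovers the weaker $\sum_{|k|>N}|A_{s,k}|=O(N^{-1})$, and squeezing out the additional power requires carrying the cancellation at the level of the group $e^{tH}$ itself (using that $e^{(k+1)H}+e^{kH}=e^{kH}(I+e^{H})$ is small exactly on the frequency band where the vectors $e^{kH}{\bf a}$ oscillate the most). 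Once that uniform scalar estimate is in hand, dualizing against all ${\bf a}^{*}\in l^{2}$ and using the $l^{2}$-convergence of the series (\ref{b1})--(\ref{b2}) gives (\ref{appB}).
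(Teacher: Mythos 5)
Your reduction to the tail of the series and the triangle--inequality bound $\|H^{r}{\bf a}-\mathcal{R}^{(r)}_{H}(N){\bf a}\|\le\|{\bf a}\|\sum_{|k|>N}|A_{s,k}|$ (resp.\ $|B_{s,k}|$) is exactly what the paper's own treatment amounts to: the theorem is introduced with the words ``one obviously has,'' and the only obvious estimate, via the isometry of $e^{tH}$ and (\ref{id-2}), is the $O(N^{-1})$ bound you wrote down, since $|A_{s,k}|$ and $|B_{s,k}|$ decay exactly like $k^{-2}$ with eventually constant sign. So the first half of your argument is correct and proves the statement with $N^{-1}$ in place of $N^{-2}$.

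The remaining step --- transferring alternating-sign cancellation to get the extra power of $N$ --- is a genuine gap, and it cannot be closed along the route you sketch, because the uniform scalar estimate you propose is false at the critical type. Take $\Phi(t)=e^{i\pi t}\in{\bf B}_{\pi}^{\infty}(\mathbb{R})$: then $\Phi(k-1/2)=-i(-1)^{k}$, the factor $(-1)^{k}$ annihilates the alternation in $(-1)^{k+1}A_{s,k}$, and the truncation error equals $i\sum_{|k|>N}A_{s,k}$, which is of exact order $N^{-1}$ because $A_{s,k}\sim(-1)^{s-1}(2s-1)\pi^{2s-3}(k-1/2)^{-2}$ has constant sign for large $|k|$. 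This scalar counterexample is realized inside the present setting: your own observation that $e^{H}$ is minus a shift means the modulation $(-1)^{k}$ built into $e^{(k-1/2)H}$ cancels the sign $(-1)^{k+1}$ on the frequency band where $I+e^{H}$ degenerates, and $\langle e^{tH}{\bf a},{\bf a}^{*}\rangle$ approximates $e^{i\pi t}$ on arbitrarily long intervals when $\widehat{{\bf a}}=\widehat{{\bf a}^{*}}$ concentrates near that band. Consequently the $l^{2}$-valued tail is genuinely of order $N^{-1}$ for suitable unit vectors, and even for a fixed ${\bf a}$ one can arrange intermediate rates such as $N^{-5/4}$; an $O(N^{-2})$ bound holds only under an additional hypothesis of the type $(I+e^{H})^{-1}{\bf a}\in l^{2}$ (equivalently, enough decay of $\widehat{{\bf a}}$ at the endpoint frequency), which neither the statement nor your argument supplies. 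In short: your proposal correctly diagnoses the obstruction but does not overcome it, and what it actually establishes is $H^{r}{\bf a}=\mathcal{R}^{(r)}_{H}(N){\bf a}+O(N^{-1})$; the paper offers no argument for the stronger exponent either.
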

The next Theorem contains another Riesz-Boas-type formula. 
\begin{thm}
If ${\bf a}\in l^{2}$ then the following sampling formula holds for ${\bf a}\in \mathbb{R}$ 
and  $n\in \mathbb{N}$
\begin{equation}\label{s2}
H^{n}e^{tH}{\bf a}= \left(  n\>  sinc^{(n-1)}\left(t\right)  +t\> sinc^{(n)}\left(t\right)
   \right)    H{\bf a}            +
$$
$$
\sum_{k\neq 0}\left(  n\>  sinc^{(n-1)}\left(t-k\right)  +t\> sinc^{(n)}\left(t-k\right)
   \right)\frac{e^{kH}{\bf a}-{\bf a}}{k}.
\end{equation}
where the series converges in the norm of $l^{2}$.
In particular, for $n\in \mathbb{N}$
one has 
\begin{equation}\label{Q}
H^{n}{\bf a}=\mathcal{Q}^{n}_{H}{\bf a},
\end{equation}
where the bounded operator $\mathcal{Q}_{H}^{(n)}$ is given by the formula
\begin{equation}\label{s3}
\mathcal{Q}_{H}^{(n)}f=  \left(   sinc^{(n-1)}\left(0\right)  + sinc^{(n)}\left(0\right)
   \right) n \>H{\bf a}    +
$$
$$
n\sum_{k\neq 0}\left(   sinc^{(n-1)}\left(-k\right)  + sinc^{(n)}\left(-k\right)
   \right)\frac{e^{kH}{\bf a}-{\bf a}}{k}.
\end{equation}
\end{thm}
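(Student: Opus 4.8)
The plan is to mimic the proof of the Riesz-Boas formulas \eqref{RB1}--\eqref{RB2} that was just carried through, but starting from the sampling formula of Theorem~\ref{SST} rather than from the Boas formula. Fix ${\bf a},\>{\bf a}^{*}\in l^{2}$ and set $\Psi(t)=\left\langle (e^{tH}{\bf a}-{\bf a})/t,\>{\bf a}^{*}\right\rangle$ for $t\neq 0$ and $\Psi(0)=\langle H{\bf a},{\bf a}^{*}\rangle$; we already know $\Psi\in{\bf B}_{\pi}^{2}(\mathbb{R})$. Equivalently, the scalar function $t\mapsto t\,\Psi(t)=\langle e^{tH}{\bf a},{\bf a}^{*}\rangle-\langle{\bf a},{\bf a}^{*}\rangle$ is, up to the additive constant $\langle{\bf a},{\bf a}^{*}\rangle$, the function $\Phi(t)=\langle e^{tH}{\bf a},{\bf a}^{*}\rangle\in{\bf B}_{\pi}^{\infty}(\mathbb{R})$ whose $n$-th derivative equals $\langle H^{n}e^{tH}{\bf a},{\bf a}^{*}\rangle$.

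First I would take the interpolation formula \eqref{F11} for $\Psi$, namely $\Psi(t)=\sum_{k\in\mathbb{Z}}\Psi(k)\,sinc(t-k)$ with $\Psi(0)=\langle H{\bf a},{\bf a}^{*}\rangle$ and $\Psi(k)=\langle (e^{kH}{\bf a}-{\bf a})/k,{\bf a}^{*}\rangle$ for $k\neq 0$, multiply through by $t$ to get a representation of $\Phi(t)-\langle{\bf a},{\bf a}^{*}\rangle=t\,\Psi(t)=\sum_{k}\Psi(k)\,t\,sinc(t-k)$, and then apply $(d/dt)^{n}$ term by term. Using the Leibniz rule on the single factor product $t\cdot sinc(t-k)$ gives $(d/dt)^{n}\bigl(t\,sinc(t-k)\bigr)=n\,sinc^{(n-1)}(t-k)+t\,sinc^{(n)}(t-k)$, which is exactly the coefficient appearing in \eqref{s2}. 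Since the constant $\langle{\bf a},{\bf a}^{*}\rangle$ is killed by the derivative, this yields
$$
\langle H^{n}e^{tH}{\bf a},{\bf a}^{*}\rangle=\sum_{k\in\mathbb{Z}}\bigl(n\,sinc^{(n-1)}(t-k)+t\,sinc^{(n)}(t-k)\bigr)\Psi(k),
$$
and substituting the values of $\Psi(k)$ and separating the $k=0$ term produces the scalar version of \eqref{s2}. Because this holds for every ${\bf a}^{*}\in l^{2}$, and because the vector series $\sum_{k\neq 0}\bigl(n\,sinc^{(n-1)}(t-k)+t\,sinc^{(n)}(t-k)\bigr)\frac{e^{kH}{\bf a}-{\bf a}}{k}$ converges in $l^{2}$ (the coefficients are $O(|k|^{-2})$ for fixed $t$ while $\|e^{kH}{\bf a}-{\bf a}\|\leq 2\|{\bf a}\|$, exactly as in the proof of Theorem~\ref{SST}), the scalar identity lifts to the vector identity \eqref{s2}. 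Setting $t=0$ and using $\Phi^{(n)}(0)=\langle H^{n}{\bf a},{\bf a}^{*}\rangle$ collapses \eqref{s2} into \eqref{Q}--\eqref{s3}, with $\mathcal{Q}_{H}^{(n)}$ bounded on $l^{2}$ by the same convergence estimate.

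The step I expect to be the main obstacle is the term-by-term differentiation: one must justify interchanging $(d/dt)^{n}$ with the infinite sum in $t\,\Psi(t)=\sum_{k}\Psi(k)\,t\,sinc(t-k)$. I would handle this by invoking the Bernstein-class machinery already in play — $\Phi\in{\bf B}_{\pi}^{\infty}(\mathbb{R})$ and the sampling series for it (and for $\Psi$) converge uniformly on compact subsets of $\mathbb{C}$ by Theorems~\ref{Shannon-1}--\ref{Shannon-2}, so the differentiated series converges uniformly on compacta by the standard Weierstrass argument for holomorphic functions, giving $\Phi^{(n)}$ as claimed; alternatively one cites \cite{BSS}, where derivative-sampling formulas of exactly this type are established. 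A minor secondary point is verifying the $O(|k|^{-2})$ decay of $n\,sinc^{(n-1)}(t-k)+t\,sinc^{(n)}(t-k)$ in $k$, which follows from $sinc^{(j)}(x)=O(|x|^{-1})$ as $|x|\to\infty$ for each fixed $j$; this is routine and secures the $l^{2}$-convergence needed to pass from the scalar to the vector statement and to conclude boundedness of $\mathcal{Q}_{H}^{(n)}$.
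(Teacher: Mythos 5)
Your argument is correct and is essentially the paper's own proof: both start from the sampling series $\Psi(t)=\sum_{k}\Psi(k)\,sinc(t-k)$ for $\Psi\in{\bf B}_{\pi}^{2}(\mathbb{R})$, use the Leibniz identity $\Phi^{(n)}(t)=n\Psi^{(n-1)}(t)+t\Psi^{(n)}(t)$ coming from $\Phi(t)=\langle{\bf a},{\bf a}^{*}\rangle+t\Psi(t)$, differentiate the series term by term, and then pass from the scalar identity to the vector one via the arbitrariness of ${\bf a}^{*}$ and the $l^{2}$-convergence of the vector series. The only (harmless) slip is the remark that $n\,sinc^{(n-1)}(t-k)+t\,sinc^{(n)}(t-k)$ itself decays like $O(|k|^{-2})$ — it is only $O(|k|^{-1})$ — but the extra factor $1/k$ in $(e^{kH}{\bf a}-{\bf a})/k$ still gives the $O(|k|^{-2})$ overall bound you need, so the convergence argument stands.
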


\begin{rem}
We note that $(sinc\>t)^{(m)}(0)=(-1)^{m}/(m+1)!$ if $m$ is even, and  $(sinc\>t)^{(m)}(0)=0$ if $m$ is odd.
\end{rem}
\begin{proof}
For any ${\bf a},\>{\bf a}^{*}\in l^{2}$ the function $\Phi(t)=\left<e^{tH}{\bf a},\>{\bf a}^{*}\right>$ belongs to $B_{\pi}^{\infty}(\mathbb{R})$.
We consider $\Psi$ which was introduced previously in (\ref{F1}), (\ref{F2}). 
We have
$$
\Psi(t)=\sum_{k\in \mathbb{Z}}\Psi\left(k\right)\>  sinc\left(t-k\right),
$$
where the series converges in $l^{2}$. 
From here we obtain the next formula
$$
\left(\frac{d}{dt}\right)^{n}\Psi(t)=\sum_{k\in \mathbb{Z}}\Psi \left(k\right)  sinc^{(n)}\left(t-k\right)
$$
and  since 
$$
\left(\frac{d}{dt}\right)^{n}\Phi(t)=n\left(\frac{d}{dt}\right)^{n-1}\Psi(t)+t\left(\frac{d}{dt}\right)^{n}\Psi(t)
$$
we obtain 
$$
\left(\frac{d}{dt}\right)^{n}\Phi(t)=n\sum_{k\in \mathbb{Z}}\Psi\left(k\right)\>  sinc^{(n-1)}\left(t-k\right)+t\>\sum_{k\in \mathbb{Z}}\Psi\left(k\right)\> sinc^{(n)}\left(t-k\right).
$$
Since
$
\left(\frac{d}{dt}\right)^{n}\Phi(t)=\left<H^{n}e^{tH}{\bf a}, {\bf a}^{*}\right>,
$
and
$$
\Psi\left(k\right)=\left<\frac{e^{kH}{\bf a}-{\bf a}}{k}, {\bf a}^{*}\right>,
$$
we obtain that
the formulas (\ref{s2})-(\ref{s3})  hold. 
Theorem is proved. 
\end{proof}

\section{The case of the Kak-Hilbert transform}\label{KH}

We also briefly show how our methods can be applied to the Kak-Hilbert transform to obtain similar sampling and interpolation formulas. Kak-Hilbert transform also generates a one parameter group of operators in $l^{2}$ but it is not a group of isometries 
like in the case of the discrete Hilbert transform. However, this group of operators is uniformly bounded. This uniform boundness 
is explored  to include the case of Kak-Hilbert transform into our scheme.

The Kak-Hilbert transform $$K{\bf a}= {\bf b},\>\>{\bf a}=(a_{n})\in l^{2}, \>\>{\bf b}=(b_{n})\in l^{2},$$ is defined by the formula
$$
b_{m}=\frac{2}{\pi}\sum_{n\>even}\frac{a_{n}}{m-n},
$$
if $m$ is odd, and by the formula
$$
b_{m}=\frac{2}{\pi}\sum_{n\>odd}\frac{a_{n}}{m-n},
$$
if $m$ is even.

It is known that $K$ is an isometry in $l^{2}$ (see \cite{Kak}). As a bounded operator $K$ generates a one-parameter group $e^{t K}$ of bounded operators in $l^{2}$. One can verify the property 
$K^{2}=-I$ which implies the
explicit formula for $e^{t K}$ (see \cite{DC}):
$$
e^{t K}=(\cos t)\>I+(\sin t)\>K,
$$
which gives the uniform bound $\|e^{t K}\|\leq 2$.

Pick an ${\bf a}^{*}\in l^{2}$ and  consider a scalar-valued function 
$$
F(t)=\langle e^{t K}{\bf a}, {\bf a}^{*}\rangle,\>\>\>\>t\in \mathbb{R}.
$$
Note, that since $K$ is an isometry, the analog of the Bernstein inequality takes the form 
$$
\|K^{n}{\bf a}\|=\|{\bf a}\|,\>\>\>n\in \mathbb{N},
$$
(compare to (\ref{bern-1}).
Using this inequality one can easily 
 prove the following analog of the Lemma \ref{LEM}.

\begin{lem}\label{LEM-1}
For every ${\bf a}\in l^{2}$ and every ${\bf a}^{*}\in l^{2}$ the function $F$ belongs to the Bernstein class ${\bf B}_{1}^{\infty}(\mathbb{R}).$ 

\end{lem}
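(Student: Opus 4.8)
The plan is to imitate, essentially verbatim, the proof of Lemma~\ref{LEM}, replacing the operator $H$ (of norm $\pi$) by the operator $K$ (an isometry, hence of norm $1$). Concretely, fix ${\bf a},{\bf a}^{*}\in l^{2}$ and set $F(t)=\langle e^{tK}{\bf a},{\bf a}^{*}\rangle$. First I would record the derivative identity $\left(\tfrac{d}{dt}\right)^{k}F\big|_{t=0}=\langle K^{k}{\bf a},{\bf a}^{*}\rangle$, which follows term-by-term from the norm-convergent exponential series $e^{tK}{\bf a}=\sum_{k\ge 0}K^{k}{\bf a}\,t^{k}/k!$. Then, using $\|K^{k}{\bf a}\|=\|{\bf a}\|$ together with the Cauchy--Schwarz inequality $|\langle K^{k}{\bf a},{\bf a}^{*}\rangle|\le\|{\bf a}\|\,\|{\bf a}^{*}\|$, I would bound the Taylor series of $F$ on all of $\mathbb{C}$ by
\[
\left|\sum_{k=0}\left(\frac{d}{dt}\right)^{k}F\Big|_{t=0}\frac{z^{k}}{k!}\right|
\le \sum_{k=0}\left|\langle K^{k}{\bf a},{\bf a}^{*}\rangle\right|\frac{|z|^{k}}{k!}
\le \|{\bf a}\|\,\|{\bf a}^{*}\|\,e^{|z|},
\]
so that $F$ extends to an entire function of exponential type $1$. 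Finally, boundedness on the real line is immediate from the uniform bound on the group: $|F(t)|=|\langle e^{tK}{\bf a},{\bf a}^{*}\rangle|\le \|e^{tK}\|\,\|{\bf a}\|\,\|{\bf a}^{*}\|\le 2\|{\bf a}\|\,\|{\bf a}^{*}\|$ for all $t\in\mathbb{R}$. Combining the exponential-type-$1$ estimate with $L^{\infty}(\mathbb{R})$-boundedness gives $F\in{\bf B}_{1}^{\infty}(\mathbb{R})$ by the Paley--Wiener/Bernstein characterization recalled earlier.

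There is essentially no obstacle here: the only point where the isometry versus $\|H\|=\pi$ distinction matters is that the exponential type drops from $\pi$ to $1$, which is exactly why the target class is ${\bf B}_{1}^{\infty}$ rather than ${\bf B}_{\pi}^{\infty}$. One could alternatively skip the power-series estimate entirely and use the closed form $e^{tK}=(\cos t)I+(\sin t)K$: then $F(t)=(\cos t)\langle{\bf a},{\bf a}^{*}\rangle+(\sin t)\langle K{\bf a},{\bf a}^{*}\rangle$ is visibly an entire function of exponential type $1$ and visibly bounded on $\mathbb{R}$, since $\cos$ and $\sin$ are of type $1$ and bounded by $1$ on the real line. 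I would mention this as a quicker route, but keep the series argument as the primary one so the proof parallels Lemma~\ref{LEM} and so it transfers without change to operators not given by such an elementary closed form. Either way the proof is short, and I expect the main (very mild) thing to be careful about is simply tracking the constant in the exponent so that the exponential type is correctly identified as $1$.
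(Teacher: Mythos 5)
Your proof is correct and is exactly the argument the paper intends: the paper gives no explicit proof of Lemma~\ref{LEM-1}, saying only that one uses the inequality $\|K^{n}{\bf a}\|=\|{\bf a}\|$ to repeat the proof of Lemma~\ref{LEM}, which is precisely what you do (with the type dropping from $\pi$ to $1$ and the real-line bound coming from $\|e^{tK}\|\leq 2$). Your alternative observation via $e^{tK}=(\cos t)I+(\sin t)K$ is a nice, even quicker route that is also consistent with the explicit formula the paper records.
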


We also have the following Corollary similar to (\ref{COL}). 

\begin{col}
For a fixed ${\bf a}\in l^{2}$ the vector-valued function
$$
e^{t K}{\bf a}: \mathbb{R}\mapsto l^{2},
$$
has extension $ e^{z  K}{\bf a},\>\>z\in \mathbb{C},$ to the complex plain as an entire function of the exponential type $1$ which is bounded on the real line.
\end{col}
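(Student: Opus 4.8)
The plan is to prove the Corollary by the same two ingredients that established Corollary~\ref{COL} for the discrete Hilbert transform, namely the operator version of the Paley--Wiener estimate together with Lemma~\ref{LEM-1}. First I would observe that for fixed ${\bf a}\in l^{2}$ the vector-valued map $t\mapsto e^{tK}{\bf a}$ is given by the power series $e^{tK}{\bf a}=\sum_{k\geq 0}K^{k}{\bf a}\,t^{k}/k!$, which converges in $l^{2}$ for every $t\in\mathbb{R}$; since $K$ is a bounded operator this series automatically extends to all $z\in\mathbb{C}$ and defines an $l^{2}$-valued entire function $z\mapsto e^{zK}{\bf a}$. Using the isometry identity $\|K^{n}{\bf a}\|=\|{\bf a}\|$ (or, more crudely, the general bound \eqref{bound} with $\|K\|=1$) one gets
\begin{equation}
\left\|e^{zK}{\bf a}\right\|\leq\sum_{k=0}^{\infty}\frac{\|K^{k}{\bf a}\|\,|z|^{k}}{k!}=\|{\bf a}\|\,e^{|z|},\qquad z\in\mathbb{C},
\end{equation}
so the extension is of exponential type $1$.

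Next I would upgrade this vector-valued statement to membership in the appropriate Bernstein-type class and verify boundedness on the real line. For any ${\bf a}^{*}\in l^{2}$ the scalar function $F(t)=\langle e^{tK}{\bf a},{\bf a}^{*}\rangle$ is, by Lemma~\ref{LEM-1}, in ${\bf B}_{1}^{\infty}(\mathbb{R})$; in particular it is entire of exponential type $1$ and satisfies $|F(t)|\leq\|{\bf a}\|\,\|{\bf a}^{*}\|$ for all real $t$, the latter because $e^{tK}=(\cos t)I+(\sin t)K$ has operator norm at most $2$ (or, since $K$ is an isometry, at most $\sqrt{2}$, which is enough). Applying this with ${\bf a}^{*}=e^{tK}{\bf a}/\|e^{tK}{\bf a}\|$ (when $e^{tK}{\bf a}\neq 0$) shows directly that $\|e^{tK}{\bf a}\|$ is bounded on $\mathbb{R}$, using the explicit formula $\|e^{tK}{\bf a}\|\leq 2\|{\bf a}\|$; alternatively one simply invokes the uniform bound $\|e^{tK}\|\leq 2$ stated just above. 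Either route gives the desired uniform boundedness on the real line.

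Finally I would assemble the two pieces: the $l^{2}$-valued function $e^{zK}{\bf a}$ is entire (power series in $z$), of exponential type $1$ (by the estimate above), and bounded on $\mathbb{R}$ (by the explicit formula $e^{tK}=(\cos t)I+(\sin t)K$). This is exactly the assertion of the Corollary, and it is the vector-valued reformulation of Lemma~\ref{LEM-1} in the same way that Corollary~\ref{COL} reformulates Lemma~\ref{LEM}. I do not expect any genuine obstacle here: the only point requiring a word of care is that exponential type is a scalar notion, so to pass from "each $\langle e^{zK}{\bf a},{\bf a}^{*}\rangle$ has type $\leq 1$" to "$e^{zK}{\bf a}$ has type $\leq 1$ as an $l^{2}$-valued function" one should appeal to the direct norm estimate $\|e^{zK}{\bf a}\|\leq\|{\bf a}\|e^{|z|}$ rather than to a weak-to-strong type argument; that estimate is immediate from the exponential series and $\|K\|=1$, so the proof is essentially a two-line assembly of facts already in hand.
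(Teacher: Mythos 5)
Your argument is correct and follows exactly the route the paper intends: the power-series estimate $\|e^{zK}{\bf a}\|\leq\|{\bf a}\|e^{|z|}$ coming from $\|K^{n}{\bf a}\|=\|{\bf a}\|$ gives entirety and exponential type $1$, and the explicit formula $e^{tK}=(\cos t)I+(\sin t)K$ gives uniform boundedness on $\mathbb{R}$, which is precisely the vector-valued reformulation of Lemma~5.1 just as Corollary~2.1 reformulates Lemma~2.1. (Only a cosmetic slip: with $\|e^{tK}\|\leq 2$ the bound on $|F(t)|$ should carry the factor $2$, i.e.\ $|F(t)|\leq 2\|{\bf a}\|\|{\bf a}^{*}\|$, which of course does not affect the conclusion.)
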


Similarly to the case of the discrete Hilbert transform one could prove the following statements.

\begin{thm}For  every ${\bf a},\>{\bf a}^{*}\in l^{2}$ and every $0<\gamma< 1$ the following formulas hold true.

$$
\langle e^{t K}{\bf a}, {\bf a}^{*}\rangle=\sum_{n\in \mathbb{Z}} \langle e^{(\gamma n \pi)K}{\bf a}, {\bf a}^{*}\rangle\>
sinc\left(\frac{t}{\gamma \pi}-n\right),
$$
where the series converges uniformly on compact subsets of $\mathbb{R}$.

The following formulas also hold true
$$
e^{t K}{\bf a}=
 {\bf a}+t\>sinc \>\left(t/\pi\right)  K{\bf  a}\>+
t\sum_{n\in \mathbb{Z}\setminus \{0\}}  \frac{e^{n\pi K}{\bf a}-{\bf a}}{n\pi}
\>
sinc\left(\frac{t }{\pi}-n\right),
$$
where the series converges in the norm of $l^{2}$,
and
$$
e^{t K}{\bf a}=sinc\left( \frac{t}{\pi}\right){\bf a}+t \>sinc\left( \frac{t}{\pi}\right)K{\bf a}+  \sum_{n\in \mathbb{Z}\setminus \{0\}}\frac{ t}{n\pi}sinc\left( \frac{t}{\pi}-n\right)e^{n\pi K}{\bf a},
$$
where the series converges in $l^{2}$.
\end{thm}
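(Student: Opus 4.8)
The plan is to mirror, almost verbatim, the strategy used for the discrete Hilbert transform in the earlier sections, replacing $H$ by $K$, the exponential type $\pi$ by $1$, and the isometry group $e^{tH}$ by the uniformly bounded group $e^{tK}=(\cos t)I+(\sin t)K$. The key structural facts we already have in hand are: for every ${\bf a},{\bf a}^{*}\in l^{2}$ the scalar function $F(t)=\langle e^{tK}{\bf a},{\bf a}^{*}\rangle$ lies in ${\bf B}_{1}^{\infty}(\mathbb{R})$ (Lemma \ref{LEM-1}), and the auxiliary function
$$
G(t)=\left\langle \frac{e^{tK}{\bf a}-{\bf a}}{t},{\bf a}^{*}\right\rangle,\quad t\neq 0,\qquad G(0)=\langle K{\bf a},{\bf a}^{*}\rangle,
$$
lies in ${\bf B}_{1}^{2}(\mathbb{R})$, by exactly the same entire-extension-plus-decay argument as for $\Psi$ (the $L^{2}$ decay now uses $\|e^{tK}{\bf a}-{\bf a}\|\le (1+2)\|{\bf a}\|$, since $\|e^{tK}\|\le 2$).

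For the first formula I would apply Theorem \ref{Shannon-1} to $h=F\in{\bf B}_{1}^{\infty}(\mathbb{R})$ with $\sigma=1$: for any $0<\gamma<1$,
$$
F(z)=\sum_{n\in\mathbb{Z}}F(\gamma n\pi)\,\mathrm{sinc}\!\left(\gamma^{-1}\frac{z}{\pi}-n\right),
$$
and since $F(\gamma n\pi)=\langle e^{(\gamma n\pi)K}{\bf a},{\bf a}^{*}\rangle$ this is precisely the claimed identity (with $z=t$ real); uniform convergence on compact subsets of $\mathbb{R}$ is inherited from Theorem \ref{Shannon-1}. For the second formula I would apply Theorem \ref{Shannon-2} to $h=G\in{\bf B}_{1}^{2}(\mathbb{R})$ with $\sigma=1$, getting $G(t)=\sum_{n}G(n\pi)\,\mathrm{sinc}(t/\pi-n)$, where $G(n\pi)=\langle (e^{n\pi K}{\bf a}-{\bf a})/(n\pi),{\bf a}^{*}\rangle$ for $n\neq 0$ and $G(0)=\langle K{\bf a},{\bf a}^{*}\rangle$; multiplying through by $t$, moving the $n=0$ term out (it contributes $t\,\mathrm{sinc}(t/\pi)\langle K{\bf a},{\bf a}^{*}\rangle$) and adding back $\langle{\bf a},{\bf a}^{*}\rangle$ recovers $\langle e^{tK}{\bf a},{\bf a}^{*}\rangle$ on the left. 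For the third formula I would apply the Valiron--Tschakaloff formula (\ref{vt}) to $F$ with $\sigma=1$, whose node values are $F(n\pi)=\langle e^{n\pi K}{\bf a},{\bf a}^{*}\rangle$ and whose derivative term is $F'(0)=\langle K{\bf a},{\bf a}^{*}\rangle$.

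In all three cases the final step is the same routine passage from scalar identities to vector identities in $l^{2}$: one checks that the relevant vector series converges absolutely in $l^{2}$ for each fixed $t$ — e.g. $\sum_{n\neq 0}\|e^{n\pi K}{\bf a}-{\bf a}\|\,|\mathrm{sinc}(t/\pi-n)|/(|n|\pi)\le 3\|{\bf a}\|\sum_{n\neq 0}\frac{1}{\pi|n|\,|t/\pi-n|}<\infty$, and similarly for the Valiron--Tschakaloff series $\sum_{n\neq 0}\frac{|t|}{|n|\pi}|\mathrm{sinc}(t/\pi-n)|\,\|e^{n\pi K}{\bf a}\|<\infty$ using $\|e^{n\pi K}{\bf a}\|\le 2\|{\bf a}\|$ — so that the series defines an element ${\bf c}(t)\in l^{2}$; since $\langle{\bf c}(t),{\bf a}^{*}\rangle$ equals $\langle e^{tK}{\bf a},{\bf a}^{*}\rangle$ for every ${\bf a}^{*}\in l^{2}$, we get ${\bf c}(t)=e^{tK}{\bf a}$.

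The only genuine point requiring a moment's care — the "main obstacle," though it is mild — is the bookkeeping of the scaling constants: the Bernstein type here is $1$ rather than $\pi$, so the sampling nodes are spaced by $\gamma\pi$ (resp. $\pi$) rather than $\gamma$ (resp. $1$), and the $\mathrm{sinc}$ arguments acquire the corresponding $1/\pi$ and $1/(\gamma\pi)$ factors; one must also track that the derivative of $e^{tK}$ at a node produces $K{\bf a}$ (not $\pi K{\bf a}$ or the like), which follows directly from $\frac{d}{dt}e^{tK}=Ke^{tK}$ and property (3) of the group. Once the constants are matched, nothing in the proof is different from the $H$ case, so I would present it as: "The three formulas follow by applying Theorems \ref{Shannon-1}, \ref{Shannon-2}, and formula (\ref{vt}) to the functions $F$ and $G$ above, exactly as in the proofs of Theorems \ref{FST}, \ref{SST}, and \ref{V-T}, and then passing from scalar to vector identities using $\|e^{tK}\|\le 2$."
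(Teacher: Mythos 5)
Your proposal is correct and is exactly the argument the paper intends: the paper gives no explicit proof of this theorem, stating only that it follows ``similarly to the case of the discrete Hilbert transform,'' and your adaptation (replacing $H$ by $K$, exponential type $\pi$ by $1$, the isometry bound by $\|e^{tK}\|\le 2$, and rescaling the nodes to $\gamma n\pi$ and $n\pi$ with the matching $1/(\gamma\pi)$ and $1/\pi$ factors in the $sinc$ arguments) is precisely the intended route. The constant bookkeeping you flag is handled correctly, so nothing further is needed.
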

One could also reformulate for the Kak-Hilbert transform all other results which were obtained for the discrete Hilbert transform. 
In particular, one could  introduce bounded operators
$$
\mathcal{T}^{(2m-1)}_{K}{\bf a}=
\sum_{n\in \mathbb{Z}}(-1)^{n+1}A_{m,n}
e^{(n-1/2)\pi K}{\bf a}
,\>\> \>\>{\bf a}\in l^{2},\>\>\>m\in \mathbb{N},
$$
and
$$
\mathcal{T}^{(2m)}_{K}{\bf a}=
\sum_{n\in \mathbb{Z}}(-1)^{n+1}B_{m,n}
e^{n\pi K}
{\bf a}, \>\> \>\>{\bf a}\in l^{2}, \>\>m\in \mathbb{N},
$$
and to prove relevant Riesz-Boas-type interpolation formulas

$$
K^{r}{\bf a}=\mathcal{T}^{(r)}_{K}{\bf a},\>\>\>{\bf a}\in l^{2},\>\>\>r\in \mathbb{N}.
$$

\bigskip

{\bf Acknowledgements.}  I would like to thank  Professor Carlo Bardaro and Professor Rudolf Stens for interesting and constructive discussions and suggestions.

\end{document}